\documentclass[leqno,a4wide]{amsart}

\usepackage{amssymb,amsmath}
\usepackage{times,bbm,epsfig,float,fancyhdr}
\usepackage{mathtools}
\usepackage{color}
\usepackage[pagebackref=true,colorlinks=true]{hyperref}
\usepackage[pagewise]{lineno}%\linenumbers
\usepackage{tikz}
\usepackage{wrapfig}
\usepackage{algorithm}
\usepackage{algorithmic}

\newcommand{\todo}[1]{$\clubsuit$\ {\tt {\color{red} #1}}\ $\clubsuit$}
\newcommand*{\mailto}[1]{\href{mailto:#1}{\nolinkurl{#1}}}

\usepackage{comment}
\usepackage{graphicx}
\usepackage{subcaption}
\usepackage{amscd}
\usepackage{amssymb}
\usepackage{amsthm}
\usepackage{amsmath}
\usepackage{latexsym}
\usepackage{hyperref}
\usepackage{graphicx} % Required for inserting images
\usepackage{amsmath}
\usepackage{hyperref}
\usepackage{amssymb}
\usepackage{amsfonts}
\usepackage{amsthm}
\usepackage{mathtools}
\usepackage{wasysym}
\usepackage{nicefrac}
\usepackage[shortlabels]{enumitem}
\usepackage{pgfplots}
\mathtoolsset{showonlyrefs=true}		% Show equation numbers only if referred
\usepackage{enumitem}
\usepackage{float}
\usepackage{empheq,mathtools}
\usepackage{amstext}
\usepackage{graphicx}
\usepackage{color}
\usepackage{hyperref}
\usepackage{a4wide}
\usepackage[normalem]{ulem}
\usepackage{bbm}
\usepackage[numbers,sort&compress]{natbib}
\usepackage[a4paper, left=2.5cm, right=2.5cm, top=3.5cm, bottom=2cm]{geometry}
\usepackage{tikz}
\usetikzlibrary{calc,tikzmark,shapes,arrows}

\usepackage{pgfplots}		% Plotting based tikz
\usepackage[english]{babel}

\hypersetup{pdfborder=0 0 0}

\usepackage{tabularx,booktabs,caption}
\newcolumntype{C}[1]{>{\Centering}m{#1}}
\renewcommand\tabularxcolumn[1]{C{#1}}

\newcommand{\R}{\mathbb{R}}

\newtheorem{thm}{Theorem}[section]
\theoremstyle{plain}
\newtheorem{lem}[thm]{Lemma}

\theoremstyle{definition}

\newtheorem{rem}{Remark}[section]

\newcommand{\ZZ}{\mathbb{Z}}			% The set of integers
\newcommand{\NN}{\mathbb{N}}			% The set of natural numbers
\newcommand{\RR}{\mathbb{R}}			% The set of reals
\newcommand{\QQ}{\mathbb{Q}}			% The set of rationals

\newcommand{\isdef}{\coloneqq}			% "Is Defined" symbol
\DeclarePairedDelimiter\abs{\lvert}{\rvert}		% Absolute value (using mathtools)
\DeclarePairedDelimiter\norm{\lVert}{\rVert}	% Norm (using mathtools)
\DeclarePairedDelimiter\floor{\lfloor}{\rfloor}	% Floor (using mathtools)
\DeclarePairedDelimiter\ceil{\lceil}{\rceil}	% Ceiling (using mathtools)
\renewcommand{\complement}{%				% Set complement
	\mathsf{c}%
}
\newcommand{\symdiff}{\mathbin{\triangle}}      % Symmetric difference of sets

\newcommand{\dd}{\mathop{}\!\mathrm{d}}	% Differential
\newcommand{\ee}{\mathrm{e}}			% Euler's constant
\newcommand{\ii}{\mathrm{i}}			% Imaginary unit

\newcommand{\smallo}{o}					% Small o
\newcommand{\bigo}{O}					% Big O

\newcommand{\PP}{\mathbb{P}}
\newcommand\given[1][]{\:#1\vert\:}					% Conditional probability
\newcommand{\EExp}{\operatorname{\mathbb{E}}}               % Expected value
\newcommand{\VVar}{\operatorname{\mathbb{V}\mathrm{ar}}}	% Variance
\newcommand{\CCov}{\operatorname{\mathbb{C}\mathrm{ov}}}		% Covariance
\newcommand{\indicator}[1]{\mathbbm{1}_{#1}}			% Indicator function

\DeclareMathOperator{\maj}{maj}

\usepackage{acro}
\DeclareAcronym{iid}{%
    short=i.i.d\acdot,
    long=independent and identically distributed,
    first-style=short
}
\DeclareAcronym{ie}{%
    short=i.e\acdot,
    long=that is,
    first-style=short
}
\DeclareAcronym{eg}{%
    short=e.g\acdot,
    long=for example,
    first-style=short
}
\newtheorem{theorem}{Theorem}[section]
\newtheorem{lemma}[theorem]{Lemma}
\newtheorem{corollary}{Corollary}[theorem]
\newtheorem{proposition}{Proposition}[section]
\theoremstyle{definition}
\newtheorem{definition}{Definition}
\theoremstyle{remark}
\newtheorem{example}{Example}

\AtBeginEnvironment{example}{%
  \pushQED{\qed}\renewcommand{\qedsymbol}{\ensuremath{\ocircle}}%
}
\AtEndEnvironment{example}{\popQED\endexample}
\AtBeginEnvironment{remark}{%
  \pushQED{\qed}\renewcommand{\qedsymbol}{\ensuremath{\Diamond}}%
}
\AtEndEnvironment{remark}{\popQED\endremark}
\AtBeginEnvironment{example}{%
  \pushQED{\qed}\renewcommand{\qedsymbol}{\ensuremath{\ocircle}}%
}
\AtEndEnvironment{example}{\popQED\endexample}
\AtBeginEnvironment{remark}{%
  \pushQED{\qed}\renewcommand{\qedsymbol}{\ensuremath{\Diamond}}%
}
\AtEndEnvironment{remark}{\popQED\endremark}
\newcommand{\tempcomment}[3][]{
    \noindent{\color{blue}\textup{\texttt{[#1#2: #3]}}}
}
\newcommand{\clara}[2][]{\tempcomment[#1]{C}{#2}}
\newcommand{\anthony}[2][]{\tempcomment[#1]{A}{#2}}
\newcommand{\karim}[2][]{\tempcomment[#1]{K}{#2}}
\newcommand{\hassan}[2][]{\tempcomment[#1]{H}{#2}}
\newcommand{\fatima}[2][]{\tempcomment[#1]{F}{#2}}

\newcommand{\xmodify}[2]{%
    {\color{blue}\sout{#1} #2}
}
\newcommand{\xremove}[1]{%
    {\color{blue}\sout{#1}}
}
\newcommand{\xadd}[1]{%
    {\color{blue}#1}
}
\allowdisplaybreaks
\begin{document}

\title[Derivation, Analysis and Simulation of a Spatio-Temporal Epidemiology Model with Memory]{Derivation, Analysis and Simulation of a Spatio-Temporal Epidemiology Model with Memory}
\author[H. El Bouz]{H. El Bouz}
\address[Hassan El Bouz]
{Department of Mathematics, American University of Beirut, 
Beirut 1107 2020, Lebanon}
\email[]{\mailto{hbe04@mail.aub.edu}}
\author[K. Faraj]{K. Faraj}
\address[Karim Faraj]
{Department of Mathematics, American University of Beirut, 
Beirut 1107 2020, Lebanon}
\email[]{\mailto{kmf16@mail.aub.edu}}
\author[A. Khairallah]{A. Khairallah}
\address[A. Khairallah]
{Department of Mathematics, American University of Beirut, 
Beirut 1107 2020, Lebanon}
\email[]{\mailto{adk05@mail.aub.edu}}
\author[F. Mrou\'e]{F. Mrou\'e}
\address[Fatima Mrou\'e]
{Center for Advanced Mathematical Sciences 
and Department of Mathematics, American University of Beirut, 
Beirut 1107 2020, Lebanon}
\email[]{\mailto{fm47@aub.edu.lb}}
\subjclass[2020]{Primary: 35A01, 35A15, 45J05, 45K05 ; Secondary: 92D30}
%\maketitle
\keywords{Integro-partial differential equation, reaction-diffusion system, 
 weak solution, existence,  epidemiology model}

\thanks{This work was initiated during the Mathematics Summer Research Camp for undergraduate students at the American University of Beirut. The camp is supported by the Dean's office at the Faculty of Arts and Sciences and the Center for Advanced Mathematical Sciences (CAMS). The authors acknowledge the contribution of Miss Clara Riachi during the Summer Research Camp.}

\date{\today}

\begin{abstract}
\textbf{In this paper, we propose an integro-differential model for the spatio-temporal evolution of infectious diseases with asymptomatic transmission. The model consists of a reaction-diffusion system with an integral memory term accounting for the distribution of the incubation period. We first analyze the asymptotic behavior and the properties of the integro-differential model. Then, we prove the local existence of a weak solution of the system by means of the Faedo-Galerkin method and a compactness argument. The model is applied to simulate the geographical evolution of a disease in Lebanon.}

\end{abstract}
\maketitle

\section{Introduction}
\hskip 0.5cm Epidemic models describe the spread of a communicable disease in a population. They are essential to predict the likely outcome of an epidemic \cite{epstein2009modelling}. From a strategic and healthcare management perspective, they help estimate the severity of the epidemic, forecast its time course, and minimize its socioeconomic consequences. A well-known class of models is compartmental models built on differential equations and assuming that the population is perfectly mixed with people moving between compartments such as susceptible (S), infected (I), and recovered (R) \cite{kermack1927contribution,naasell1996quasi,hurley2006basic,jin2007sirs}. These models revealed the threshold nature of epidemics, have been successful in explaining ``herd immunity" and may be extended to include spatial structure.  Since diseases spread differently across various geographical regions, spatial variability is crucial in understanding the impact of public health policies
and interventions in controlling their evolution. Accounting for the movement of people in a spatial context is a persistent challenge in describing spatial variability in epidemics.
  Historically, various approaches have been applied to produce such descriptions, including agent-based
models, network models, stochastic models, and partial differential equation (PDE) models. Agent-based, network, and stochastic models use realistic data of population movement and interaction to simulate human behavior at spatial and temporal levels based on probabilistic assumptions \cite{bonabeau2002agent,ajelli2010comparing,epstein2009modelling,hunter2017taxonomy,hunter2018open,tracy2018agent,kai2020universal,eubank2004modelling,calvetti2020metapopulation}.
However, these models require substantive data collection to set their structural parameters \cite{epstein2007controlling}. Such data are not always available in the early stages of the outbreak or in many developing countries. An alternative approach is the combination of partial differential equations (PDE) with compartmental models that relieve the heavy informational resource. Many authors have applied PDE models to study the evolution of
epidemics in spatial contexts, using diffusion terms to describe the movement of both susceptible and infected individuals or only infectious populations \cite{PierreMagal2020DCDSB,mammeri2020reaction,vaziry2022modelling,allen2008asymptotic,cui2017dynamics,kuniya2017lyapunov,ducrot2014convergence,moritz2023}.\\

In the present work, we propose a spatio-temporal refinement of a compartmental SIR model starting from a discrete time-discrete state space model that is a simplified version of a more complex and realistic model that accounts for social heterogeneities affecting disease transmission \cite{mourad2022stochastic}. The proposed model incorporates the effect of the incubation period of a transmissible disease with an asymptomatic transmission phase through an integral term and then the spatial evolution through a diffusion term. Our objective is to first analyze a reaction-diffusion epidemic model and second to simulate the evolution of an epidemic with asymptomatic transmission in a geographical setting, such as in Lebanon. 
%Our starting point is a discrete time-discrete state space model describing the infection and removal processes leading to a continuous time–continuous state space mathematical model by a limiting process. The discrete-time model with which we start is a simplified version of a more complex and realistic model that takes into account social heterogeneities affecting disease transmission \cite{mourad2022stochastic}.
%with diffusion modeling the infected and susceptible populations and reaction term involving an integral accounting for the incubation period
 
The novelty of the model lies in its incorporation of the incubation period of a viral infection into the removal process. The incubation period is defined as the time from exposure or infection by a microorganism to the onset of the illness and the appearance of symptoms. It can range from a few hours, as in the case of food poisoning, to the order of decades, as in tuberculosis. During the incubation period of many acute infectious diseases, the infected host can be infectious. The incubation period of infectious diseases offers various
insights into clinical and public health practices and is therefore directly related to prevention and control measures. Accordingly, it is involved herein in the removal process as detailed in the next section. Moreover, it allows for the distinction between two classes of infected individuals (infected $I$ and actively infected $I_a$), helping to track, in the future, people who become infected on a particular day and how
long they remain actively infecting others before they are detected by the appearance of symptoms. Several models in the mathematical epidemiology literature handle the incubation period in different ways, depending on assumptions about timing, distributions, and structure. Some are compartmental models with an explicit E compartment for the exposed population (SEIR) during the incubation period, while others use fixed or distributed time delays to model incubation explicitly. There are also integro-differential equation (IDE) models that use integral kernels to model the distribution of incubation times across the population. A typical example of such models is the Kermack--McKendrick model (with distributed delay), which is one of the foundational models that has an integro-differential form. It generalizes the classical SIR model by introducing an infection-age structure, allowing for a distribution of incubation or infectious periods \cite{kermack1932contributions}. A key difference from the present work is that the incubation period is incorporated in the removal process, not in the infectiousness of individuals. The rationale is explained in Section \ref{sec:Modeling}. 

This paper is organized as follows. In Section \ref{sec:Modeling}, we derive the general continuous spatial model, starting from a time-discrete model without space. In Section \ref{sec:analysisTime}, we analyze the continuous temporal model and its asymptotic behavior, and we show the local existence of a weak solution for the spatio-temporal model in Section \ref{sec:analysisSpace}. The numerical results are presented in Section \ref{sec:numerical}. Finally, we conclude with some remarks in Section \ref{sec:Conc}.

\section{Modeling Setup}\label{sec:Modeling}
To present the proposed model, we start from a discrete  model describing the infection and removal processes, and then we obtain, by a heuristic limiting process, a continuous mathematical model. The discrete model is a simplified version of a more complex and realistic model that takes into account social heterogeneities that affect disease transmission \cite{mourad2022stochastic}. Specifically, it incorporates
probability distributions for the size of the family within the
same household, the number of people contacted per day, and their subdivision into known versus unknown
contacts \cite{mourad2022stochastic}. To make the present work self-contained, we detail in the following paragraph the derivation of the simplified discrete model from first principles. Moreover, we note that the present work accounts for the spatial evolution of the disease by introducing a diffusive term into the equations.\\
\subsection{Discrete Time Model}
We start from a fully susceptible population. After the introduction of the microorganism, a susceptible individual can be infected and then removed. By ``removed individuals", we mean those who have been detected and, therefore, isolated. The cumulative number $I(n)$, $I:\mathbb{N}\rightarrow \mathbb{R}^+$, of infected individuals up to a day $n$ is given by: 
\begin{equation} \label{cumulcases}
I(n) = I(n-1) + \Delta I(n,n),
\end{equation}
where $\Delta I(n,n)$ denotes the daily number of new infections that have been infected on day $n$ and $\Delta I(k,n)$, for $k>n$, denotes the remaining number out of $\Delta I(n,n)$ that are still active on day $k$. By convention, we write $\Delta I(0,0) = 0$. We also have the following conservation equation
\begin{equation} \label{conservation}
S(n) + I(n) = M,
\end{equation}
where $M$ denotes the total population, and we assume that the number of births equals the number of deaths, so that the total population is constant.\\
\textit{The infection process} of the susceptible population is considered to evolve as follows.
New infections on day $n$ are introduced by contact with an actively infected individual ($I_a$). Active infected individuals correspond to infected asymptomatic individuals, because they are unaware that they contracted the disease. We further assume that new infections result from contact with actively infected individuals from the time of infection until the day $n-1$. These assumptions are represented by the following relationship
\begin{equation}\label{eq:newContact}
\Delta I(n,n)  =  \displaystyle\beta p_S(n) \sum_{i=1}^{n-1} \Delta I(n-1,n-i)
\end{equation} 
where $\beta$ is the infection probability coefficient and $p_S(n) = \dfrac{S(n-1)}{M}$ is the proportion of susceptible individuals within the total population.\\

On the other hand, for \textit{the removal process} we denote by $\Delta R(n)$ the number of cases removed on day $n$. We have the following relationship
\begin{equation}\label{eq:RemovedX}
\Delta R(n) = \sum_{k=1}^{n-1}\Big[\Delta I(n-1,k)-\Delta I(n,k)\Big].
\end{equation}

We assume that the removal of infected individuals takes place according to the appearance of symptoms which leads to isolation or to the cure of asymptomatic cases so that they are no longer infectious. \\

We denote by $dS$ the discrete random variable of the incubation period, its probability distribution  is obtained from the log-normal distribution. Indeed, the probability density function of incubation periods is usually asymmetric and skewed to the right, and is adequately described by the log-normal distribution for a number of diseases \cite{sartwell1950distribution,brookmeyer2014incubation}.  Then we have the following formula
\begin{equation}\label{eq:RecursiveChange}
\Delta I(n+i,n) = \mu(i+1)\Delta I(n,n)
\end{equation}
where $\displaystyle\mu(i)=1-\sum_{k<i}g(k)=\sum_{k\geq i}g(k)$, where $g(k) = \mathbb{P}[dS = k]$ is the probability that the incubation period is equal to $k$. Notice that $\mu(1) = 1$ and $\displaystyle\lim_{i\rightarrow+\infty} \mu(i) = 0$. %Also by convention, we set $\mu(0)=0$.

Accordingly, the daily new number of removed cases is given by
\begin{equation}\label{eq:RX}
\Delta R(n) = \sum_{i=1}^{n-1}g(n-i)\Delta I(i,i),
\end{equation}
and the total number of cases removed on day $n$ is
\begin{equation}
 R(n) = R(n-1)+\Delta R(n).
\end{equation}

Hence, the current number of active infected cases (i.e., that can infect others) is given by
\begin{equation}
 I_a(n) = I(n) - R(n).
\end{equation}
For a better understanding of the discrete model, in the following Lemma, we express $\Delta I(n,n)$ (given in Equation \eqref{eq:newContact}) solely in terms of $\Delta I(k,k)$ that can be viewed as a function of one time variable $k$. The resulting system can be considered as fitting well into the discrete time series models. Moreover, this way of expression turns out to be helpful in the derivation of a continuous model.\\

\begin{lem}\label{lem:newContactRecursive}
Let $\Delta I(n,n)$ be given as in Equation (\ref{eq:newContact}), then 
\begin{equation}\label{eq:newContactRecursive}
\Delta I(n,n)=\beta p_S(n)\displaystyle\sum_{i=1}^{n-1}\mu(i)\Delta I(n-i,n-i),
\end{equation}
and consequently
\begin{eqnarray}
\Delta I(n,n)&=&\displaystyle\beta\sum_{i=1}^{n-1}\mu(n-i)\Delta I(i,i)
-\dfrac{\beta}{M}\sum_{i=1}^{n-1}\sum_{k=1}^{n-1}\mu(n-i)\Delta I(k,k)\Delta I(i,i).
\end{eqnarray}

\end{lem}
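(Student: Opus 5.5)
We first substitute the recursive change formula \eqref{eq:RecursiveChange} into \eqref{eq:newContact}, and then rewrite $p_S(n)$ using \eqref{conservation} and \eqref{cumulcases}.

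\emph{First identity.} In \eqref{eq:newContact}, the summand $\Delta I(n-1,n-i)$ counts those infected on day $n-i$ who are still active on day $n-1$. Applying \eqref{eq:RecursiveChange} with base day $m=n-i$ and offset $(n-1)-(n-i)=i-1$ gives
\[
\Delta I(n-1,n-i)=\mu\bigl((i-1)+1\bigr)\Delta I(n-i,n-i)=\mu(i)\Delta I(n-i,n-i).
\]
The range $1\le i\le n-1$ corresponds to $m=n-i\in\{1,\dots,n-1\}$. The case $i=1$ (offset $0$) is consistent because $\mu(1)=1$. Summing over $i$ and multiplying by $\beta p_S(n)$ yields \eqref{eq:newContactRecursive}.

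\emph{Second identity.} By \eqref{conservation}, $p_S(n)=S(n-1)/M=1-I(n-1)/M$. Iterating \eqref{cumulcases} from a fully susceptible start, with $I(0)=0$ and $\Delta I(0,0)=0$, gives
\[
I(n-1)=\sum_{k=1}^{n-1}\Delta I(k,k).
\]
Next, reindex the sum in \eqref{eq:newContactRecursive} by $j=n-i$, so that it becomes $\sum_{j=1}^{n-1}\mu(n-j)\Delta I(j,j)$. Substituting both expressions,
\[
\Delta I(n,n)=\beta\Bigl(1-\frac1M\sum_{k=1}^{n-1}\Delta I(k,k)\Bigr)\sum_{i=1}^{n-1}\mu(n-i)\Delta I(i,i).
\]
Expanding the product as a double sum gives the claimed formula.

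The main obstacle is bookkeeping rather than any genuine difficulty. We must check that the index shift in \eqref{eq:RecursiveChange} produces $\mu(i)$ and not $\mu(i+1)$. We must also confirm that the convention $I(0)=0$, i.e.\ the "fully susceptible population" assumption, is what lets $I(n-1)$ be written purely as a sum of the $\Delta I(k,k)$. If the initial infected seed were nonzero, an extra constant term $I(0)/M$ would appear in the expansion. The statement implicitly assumes that it vanishes, so we would state this assumption explicitly.
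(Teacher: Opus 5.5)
Your proposal is correct and follows the same route as the paper's proof: you apply \eqref{eq:RecursiveChange} termwise, with offset $i-1$, to get $\Delta I(n-1,n-i)=\mu(i)\Delta I(n-i,n-i)$, then write $p_S(n)=1-\frac{1}{M}\sum_{k=1}^{n-1}\Delta I(k,k)$ via \eqref{conservation} and \eqref{cumulcases}, reindex and expand. Your explicit remark that this last step needs $I(0)=0$ (the fully susceptible start, consistent with the paper's later use of $I(1)=\Delta I(1,1)$) spells out an assumption the paper uses without stating it.
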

\begin{proof}
The term in the right side of Equation \eqref{eq:newContact} can be written as
\begin{equation}\label{prop1:proof1} 
\begin{array}{ccl}
\displaystyle \sum_{i=1}^{n-1} \Delta I(n-1,n-i)&=&\displaystyle \Delta I(n-1,n-1)+\Delta I((n-2)+1,n-2)\\
&&\displaystyle +\cdots+\Delta I(1+(n-2),1).
\end{array}
\end{equation}
Using Equation \eqref{eq:RecursiveChange} in \eqref{prop1:proof1}, one gets
$$
\begin{array}{rcl}
\displaystyle\sum_{i=1}^{n-1} \Delta I(n-1,n-i)&=&\displaystyle \mu(1)\Delta I(n-1,n-1)+\mu(2)\Delta I(n-2,n-2)\\
&&\displaystyle+\cdots+\mu(n-1)\Delta I(1,1)\\
&=&\displaystyle\sum_{i=1}^{n-1}\mu(i)\Delta I(n-i,n-i).
\end{array}
$$

Using this last equation, one has
\begin{equation}\label{eq:newContactRecursive1}
\nonumber\Delta I(n,n)=\beta p_S(n)\sum_{i=1}^{n-1}\mu(i)\Delta I(n-i,n-i)
\end{equation}
Now, observe that $p_S(n)$ can be written as
\begin{eqnarray*}
p_S(n)&=&1-\dfrac{I(n-1)}{M}\\
&=&1-\dfrac{1}{M}\sum_{k=1}^{n-1}\Delta I(k,k).
\end{eqnarray*}
So, replacing $p_S(n)$ by this last expression in \eqref{eq:newContactRecursive1}, followed by a reindexing, we obtain Equation \eqref{eq:newContactRecursive}.
\end{proof}
\subsection{The Continuous Time  Model}
In the preceding paragraphs, the discrete-time variable was denoted by $n$ and the unit time step was one day. Now, we denote by $t$ the continuous-time variable, and we use the same notation, and the variables $R, S$, and $I$ are considered functions of time $t$. \\

In this section, we formally pass to the limit in the discrete model to derive an equivalent continuous model.\\

Here, the daily new cases $\Delta I(n,n)$ will be replaced, for $n\geq2$, by $I'(t)$. Moreover, we have
\begin{equation}
\displaystyle\sum_{i=1}^{n-1}\Delta I(i,i)\rightarrow I(0)+\int_{0}^{t}I'(s)ds.
\end{equation}
Since $I(1) = \Delta I(1,1)$ and $\Delta I(i,i) = I(i) -I(i-1)$ for $i\geq2$, we can also write
\begin{equation}
\displaystyle\sum_{i=1}^{n-1}\mu(n-i)\Delta I(i,i)\rightarrow \mu(t)I(0)+\int_{0}^{t}\mu(t-s)I'(s)ds.
\end{equation}
 
Equation \eqref{conservation} gives
\begin{equation}
S(t)+I(t) = M.
\end{equation}
Moreover, Equation \eqref{eq:newContactRecursive} gives 
\begin{equation}\label{Sys_SFC1c}
I'(t) = \displaystyle\beta^\star S(t)\Bigg[\mu(t)I(0) +\int_{0}^{t}\mu(t-s)I'(s)ds\Bigg]
\end{equation}
where  $\beta^\star = \dfrac{\beta}{M}$, and  $\displaystyle\mu(t)=\int_t^{+\infty}g(s)ds$, so $\mu'(t)=-g(t)$.\\

Here, the kernel $g$ is a log-normal probability density function supported on $\mathbb{R}_+$, defined by
\begin{equation}\label{eq:g:log-normal}
g(t)=
\begin{cases}
\dfrac{1}{t\sigma\sqrt{2\pi}}
\exp\!\left(-\dfrac{(\ln t-\mu)^2}{2\sigma^2}\right), & t>0,\\[8pt]
0, & t\le 0,
\end{cases}
\end{equation}
where $\mu\in\mathbb{R}$ and $\sigma>0$. In particular,
\[
\int_0^\infty g(t)\,dt = 1.
\]

A simple integration by parts of the terms of the form $\displaystyle\int_{0}^{t}\mu(s)I'(t-s)ds$ leads to terms involving the function $I$ only. Consequently, Equation \eqref{Sys_SFC1c} reduces to an equation whose right side does not involve the derivative of $I(t)$. In other words, we have
\begin{equation}
\displaystyle\int_{0}^{t}\mu(s)I'(t-s)ds = I(t)-\mu(t)I(0)+\int_{0}^{t}\mu'(t-s)I(s)ds.
\end{equation}
Consequently, Equation \eqref{Sys_SFC1c} becomes
\begin{equation}\label{Sys_SFC1d}
\begin{array}{ccl}
I'(t) &=& \displaystyle\beta^\star S(t)\Bigg[I(t)+\int_{0}^{t}\mu'(t-s)I(s)ds\Bigg]\\
&=& \displaystyle\beta^\star S(t)\Bigg[I(t)-\int_{0}^{t}g(t-s)I(s)ds\Bigg].
\end{array}
\end{equation}

For the removed infected cases $t\mapsto R(t)$, Equation \eqref{eq:RX} gives
\begin{equation}
R'(t) = g(t)I(0)+\int_{0}^{t}g(t-s)I'(s)ds = g(0)I(t)+\int_{0}^{t}g'(t-s)I(s)ds.
\end{equation}
However, notice that 
\begin{equation}
\dfrac{d}{dt}\int_{0}^{t}g(t-s)I(s)ds = g(0)I(t)+\int_{0}^{t}g'(t-s)I(s)ds.
\end{equation}
Therefore we have
\begin{equation}\label{eq:RXcontinuous}
R(t) = \int_{0}^{t}g(t-s)I(s)ds,
\end{equation}
where $R(0) = 0$.
In summary, the continuous system is given by:
\begin{equation}\label{Syst1:IDE}\left\{\begin{array}{l}
I'(t) =\displaystyle\beta^\star S(t)\Bigg[I(t)-\int_{0}^{t}g(t-s)I(s)ds\Bigg],\\
S'(t)=\displaystyle-\beta^*S(t)\Bigg[I(t)-\int_{0}^{t}g(t-s)I(s)ds\Bigg]\\
R(t)=\displaystyle\int_{0}^{t}g(t-s)I(s)ds,\\
I(t)+S(t)=M,\\
S(0)=S_0,I(0)=I_0, R(0)=0.
\end{array}\right.
\end{equation} 
\medskip
\noindent\textbf{Properties of the kernel.}
Throughout the paper, we use the kernel
$g : [0,\infty) \to \mathbb{R}_{+}$ defined in \eqref{eq:g:log-normal}. It is simple to check that the kernel $g$ verifies the following properties:
\begin{itemize}
  \item$g(s) \ge 0$ for all $s \ge 0$;
  \item $g \in L^{1}(0,\infty)$ and 
    $\int_{0}^{\infty} g(s)\,ds = 1$;
  \item $g \in C^{1}(0,\infty)$.
\end{itemize}

The active infected cases will be denoted by the function $I_a$ which is defined by 

\begin{align}
    \displaystyle I_a(t) = I(t)-\int_0^t g(t-s)I(s)ds \;\forall t>0.
\end{align}
Solving System \eqref{Syst1:IDE} reduces to solving the integro-differential equation:
\begin{equation}\label{eq:IDE_I}
  \left\{\begin{array}{lcl}
  I'(t) &=&\displaystyle\beta^\star \Big(M-I(t)\Big)\Bigg[I(t)-\int_{0}^{t}g(t-s)I(s)ds\Bigg],\\  
  I(0)&=&I_0,\end{array}\right.
\end{equation}
where $I_0$ is the initial number of infected individuals in the population. Then one can obtain the number of susceptible individuals from
$$S(t)=M-I(t),$$
and the number $R(t)$ of removed individuals is obtained from
$$R(t)=\displaystyle\int_{0}^{t}g(t-s)I(s)ds.$$
Finally, the number of actively infected individuals is deduced from: 
$$I_a(t)=I(t)-R(t).$$
\subsection{The Spatio-Temporal Model}
The propagation of many epidemics often exhibits significant spatial heterogeneity. This observation can be interpreted in several ways; however, the most common strategy involves dividing a large spatial area, such as an entire country, into smaller regions and applying standard ODE-based models to each subdivision. An alternative and increasingly explored approach is to use reaction–diffusion equations to represent the spread of infection as a diffusion-driven process. Reaction–diffusion systems describe the behavior of spatially distributed systems where the components interact both through local reactions and spatial diffusion. These models have been extensively used across disciplines such as physics, chemistry, biology, and epidemiology to investigate how populations or substances evolve and spread over space. In the context of epidemic modeling, reaction–diffusion frameworks are particularly useful for analyzing the spatial propagation of infectious diseases. Although diffusion does not realistically describe the movement of individuals, it is used in epidemics as an averaging process that cannot account for the extreme spatial and temporal heterogeneity of human movement (see, for instance, \cite{bendahmane2010reaction,yin2022reaction,zhi2023influence,zhang2020time,mammeri2020reaction,moritz2023}). Moreover,  it can be used as in \cite{moritz2023} in high-resolution local data, where the authors applied  a reaction-diffusion model to local data in Germany, showing that fast diffusion along major roads was the primary driver of regional outbreaks. %It unrealistically changes the home base of individuals during the time evolution of the disease. 
Furthermore, the spatial movement of the disease agent (microorganism: virus, bacteria) can be described by the diffusion process. We cite, for example, \cite{tu2022dynamics}, where the authors specifically modeled aerosol transmission by treating the viral concentration in the air as a diffusing substance, separate from the movement of the people. Another example is \cite{kammegne2023mathematical}, where the authors assume that individuals move somewhat randomly within their environment (short-range travel, commuting, or daily activities), which causes the ``concentration" of the virus to spread from high-density hotspots to lower-density surrounding areas. \\
%We accordingly propose a model that involves only the movement of infected individuals.\\
%\begin{equation} 
%    \begin{cases}
 %   \displaystyle\frac{\partial S}{\partial t} = - \beta^* S(t,x) \left(I(t,x) - \int_0^tg(t-s)I(s, x)ds \right) ,\quad (t,x)\in [0,T]\times\Omega\\ 
%    \displaystyle\frac{\partial I}{\partial t} = \alpha \nabla^2 I + \beta^* (M - I(t,x)) \left(I(t,x) - \int_0^tg(t-s)I(s, x)ds \right) ,\quad (t,x)\in [0,T]\times\Omega\\ 
%    \displaystyle R(t,x)=  \int_0^tg(t-s)I(s, x)ds ,\quad (t,x)\in [0,T]\times\Omega\\ 
 %   I(0, x)=I_{0}(x)>0,S(0,x)=S_0(x), R(0,x)=R_0(x), x\in \Omega\\
 %  \displaystyle \frac{\partial I}{\partial n} = 0, (t,x)\in[0,T]\times \partial \Omega
  %  \end{cases}
%\end{equation}
%where the diffusion coefficient $\alpha$ is constant.\\
We thus propose a model featuring a diffusive term in the evolution equations of infected and susceptible populations. Hence, we get the following system:
\begin{equation}\label{syst2:PDE}
\begin{cases}
    \displaystyle\dfrac{\partial I}{\partial t} - \nabla^2 I = \beta^\star S \Big[I - \int_0^t g(t-s) I(s,x)\,ds\Big], \ (t,x) \in [0,T]\times\Omega \\ 
    \displaystyle\dfrac{\partial S}{\partial t} - \nabla^2 S = -\beta^\star S [I - \int_0^tg(t-s)I(s,x)ds], \ (t,x) \in [0,T]\times\Omega \\ 
    \frac {\partial{I}} {\partial{n}} = \frac {\partial{S}} {\partial{n}} = 0, (t,x) \in [0,T]\times\partial{\Omega} \\
    S(0,x)=S_0(x), \ I(0,x)=I_0,\ x\in \Omega.
\end{cases}
\end{equation}The homogeneous Neumann boundary condition guarantees that there is no flux of individuals out of the domain boundary. 

\section{Analysis of the Continuous-Time Model}\label{sec:analysisTime}
In this section, we start with the integro-differential system \eqref{Syst1:IDE} corresponding to \eqref{syst2:PDE}, where the functions $x\rightarrow S(t,x)$ and $x\rightarrow I(t,x)$ are assumed to be constants, and we analyze the long-term behavior of the system.
We study the following system of equations:
\begin{equation} \label{syst1:IDE} \quad \quad
    \begin{cases}
    \displaystyle I'(t) = \beta^*(M - I(t)) I_a(t) = \beta^*(M - I(t)) \left(I(t) - \int_0^tg(t-s)I(s)ds \right) \\ 
    \displaystyle S'(t) = -\beta^* S(t) \left(I(t) - \int_0^tg(t-s)I(s)ds\right) \\
    S(0)=S_{0}>0, I(0)=I_{0}>0
    \end{cases}
\end{equation}
where $\beta^* = \frac{\beta}{M}$, and $\beta < 1$ are positive constants, $g$ is a log-normal probability density function defined earlier, and $S_{0}+I_{0}=M$.
\begin{theorem}[Existence and Uniqueness]\label{thm:exist_IDE}
    Let $\beta^*>0$, $0<I_0<M$ and  $0<S_0<M$. Then System \eqref{syst1:IDE} has a unique $C^1$ solution on $[0,\infty)$ such that $S(0)=S_0$, $I(0)=I_0$, $0<S(t)<M$, $0<I(t)<M$, $S(t)$ decreases to $S_\infty>0$ and $I(t)$  increases to $I_\infty<M$.
\end{theorem}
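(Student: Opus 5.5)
The plan is to treat \eqref{syst1:IDE} as a scalar nonlinear Volterra equation for $I$ alone, namely \eqref{eq:IDE_I}, recovering $S$ afterwards. I will use three facts about the kernel: $g\ge0$, $\int_0^\infty g=1$, and $\mu(t)=\int_t^\infty g(s)\,ds>0$ for every $t\ge0$, which holds because the log-normal density is positive on $(0,\infty)$. I will also use that the log-normal law has finite mean,
\[
\int_0^\infty \mu(r)\,dr=\int_0^\infty s\,g(s)\,ds<\infty .
\]

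\textbf{Local existence and uniqueness.} Write \eqref{eq:IDE_I} as the fixed-point problem
\[
I(t)=I_0+\int_0^t\beta^*(M-I(\tau))\Big[I(\tau)-\int_0^\tau g(\tau-s)I(s)\,ds\Big]d\tau
\]
in $C([0,h])$, on a ball around $I_0$. Since $g\in L^1$, the map $I\mapsto g*I$ is Lipschitz in the sup norm with constant at most $1$. The quadratic factor is Lipschitz on bounded sets. So for small $h$ the map is a contraction, and Banach's theorem gives a unique continuous solution, which is $C^1$ because the integrand is continuous. Uniqueness on any common interval follows from Gronwall's inequality applied to the difference of two solutions. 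The solution then extends to a maximal interval $[0,T^*)$, with $T^*=\infty$ unless $|I|$ blows up.

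\textbf{A priori bounds.} Along the solution, $u=M-I$ satisfies the linear equation $u'=-\beta^* I_a(t)\,u$. Hence
\[
M-I(t)=(M-I_0)\exp\Big(-\beta^*\int_0^t I_a\Big)>0 .
\]
In the same way, $S'=-\beta^*S I_a$ gives $S(t)=S_0\exp(-\beta^*\int_0^t I_a)>0$. Together with $S_0+I_0=M$ this also yields $S+I\equiv M$.

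\textbf{Positivity of $I_a$ (the main obstacle).} The key step is to show $I_a>0$, which gives both $I'>0$ and $I>0$. I will use a continuation argument. At $t=0$ we have $I_a(0)=I_0>0$, so $I'(0)>0$. Let $t^*$ be the supremum of the times $t$ such that $I'>0$ on $[0,t)$, and suppose $t^*<T^*$. On $[0,t^*]$ the function $I$ is nondecreasing and positive, so
\[
\int_0^{t^*}g(t^*-s)I(s)\,ds\le I(t^*)\int_0^{t^*}g=I(t^*)\big(1-\mu(t^*)\big),
\]
which gives $I_a(t^*)\ge \mu(t^*)I(t^*)>0$. Combined with $M-I(t^*)>0$, this gives $I'(t^*)>0$, and by continuity $I'>0$ slightly beyond $t^*$, a contradiction. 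Hence $I'>0$ on $[0,T^*)$, so that $I_0\le I<M$. The solution is therefore bounded, $T^*=\infty$, $I$ increases, and $S=M-I$ decreases, with $0<I,S<M$.

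\textbf{Limits.} By monotonicity the limits $I_\infty$ and $S_\infty$ exist. To show $S_\infty>0$, it suffices to bound $\int_0^\infty I_a$. By Fubini,
\[
\int_0^T I_a=\int_0^T I(s)\Big(1-\int_0^{T-s}g\Big)ds=\int_0^T I(s)\mu(T-s)\,ds\le M\int_0^\infty \mu(r)\,dr=M\,e^{\mu+\sigma^2/2}.
\]
Here $\mu$ and $\sigma$ in the exponent are the log-normal parameters, not the function $\mu(\cdot)$; the right-hand side is $M$ times the log-normal mean. Therefore
\[
S_\infty\ge S_0\exp\big(-\beta^*Me^{\mu+\sigma^2/2}\big)>0,
\]
and consequently $I_\infty=M-S_\infty<M$.
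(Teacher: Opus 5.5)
Your proof is correct. Its skeleton matches the paper's: local existence plus continuation, a first-bad-time argument for $I'>0$, and $\int_0^\infty I_a<\infty$ from the finite log-normal mean. The tools differ, however.

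The paper applies Burton's theorem on the invariant set $G_T$ of nondecreasing functions with values in $[0,M]$, so monotonicity and the bounds are built into the fixed-point space. It then continues with explicit steps $\delta_n=(M-I(T_n))/(\beta^*M^2)$, using $S(t)\ge S_0e^{-\beta^*Mt}$ to show $T_n\to\infty$. Strict monotonicity is proved afterwards by an integration by parts at the first zero of $I'$, and $I<M$ by a uniqueness argument.

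You instead use a plain contraction with Gronwall and the blow-up alternative. Your identity $M-I(t)=(M-I_0)\exp(-\beta^*\int_0^t I_a)$ gives $I<M$ and $S\equiv M-I$ without any sign information on $I_a$, which keeps the monotonicity step non-circular. Your bound $I_a(t^*)\ge\mu(t^*)I(t^*)$ is a more direct version of the paper's integration by parts.

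For $S_\infty>0$, the paper splits $I_a(t)=I(t)\mu(t)+\int_0^tI'(s)\int_{t-s}^tg(l)\,dl\,ds$ and bounds the two pieces separately, using $I'\ge0$ and boundedness of $s\bar G(s)$. Your Fubini identity $\int_0^TI_a=\int_0^TI(s)\mu(T-s)\,ds$ does this in one line. It also gives the explicit bound $S_\infty\ge S_0e^{-\beta^*Mm}$, with $m$ the log-normal mean.

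The one step you assert rather than prove is the continuation criterion for the memory equation. It is standard here: since $\int_0^tg\le1$, the contraction time depends only on a sup bound of the solution together with its history.
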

   
\subsection{Proof of Theorem \ref{thm:exist_IDE}}
\subsubsection*{Existence and Uniqueness}
We start by stating the following theorem proved by Burton in \cite{burton2019progressive} that is instrumental in proving the existence and uniqueness of the solution of the integro-differential equation.\\
\begin{theorem}\label{Burton}
Let $T > 0$ be arbitrary and let $E$ be the Banach space of continuous functions on $[0,T]$ equipped with the supremum norm. Assume that there exists a closed convex and bounded set 
\[
G_E \subset (E,\|\cdot\|), 
\quad 
\| G_E \| \le r_E
\]
such that 
\[
P(G_E) \subset G_E,
\]
where $P$ is the mapping defined below, and a function $\phi_0 \in G_E$ with {\color{blue} 
\[
\int_{0}^{t} A(t,s)\,v\!\bigl(t,s,a+ \int_{0}^{s}\phi_0(u)du\bigr)\ ds
\]
bounded on $[0,T]$.} If 
\begin{align}
|g(t,x) - g(t,y)| &\le \alpha\,|x - y|, 
\tag{2.4}\\[6pt]
|f(t,x) - f(t,y)| &\le K\,|x - y|, 
\tag{2.5}\\[6pt]
|v(t,s,x) - v(t,s,y)| &\le L(t,s)\,|x - y|, 
\tag{2.6}
\end{align}
\[
\int_{0}^{t} \bigl|A(t,s)\bigr|\,
L(t,s)\,ds \;\le\; M,
\quad t \in [0,T].
\tag{2.7}
\]

\[
\int_{0}^{t} \bigl|A(t,s)\bigr|\,
L(t,s)\,ds \;\le\; M,
\quad t \in [0,T].
\tag{2.7}
\]
 hold, then the direct mapping \begin{equation}\tag{2.3}
\phi(t)
 \;=\;
 g\!\Bigl(\,
   t,\,
   a \;+\; \int_{0}^{t}\phi(s)\,ds
 \Bigr)
 \;+\;
 f\!\Bigl(\,
   t,\,
   a \;+\; \int_{0}^{t}\phi(s)\,ds
 \Bigr)
 \int_{0}^{t}
   A(t,s)\,
   v\!\Bigl(\,
     t, s,\,
     a \;+\; \int_{0}^{s}\phi(u)\,du
   \Bigr)
 \,ds
\end{equation}
has a unique fixed point, and consequently, the equation \begin{equation}\tag{2.1}
x'(t) 
= g\bigl(t,x(t)\bigr)
  + f\bigl(t,x(t)\bigr)
    \int_{0}^{t}
    A(t,s)\,v\bigl(t,s,x(s)\bigr)\,ds,
\quad
x(0) = a \in \mathbb{R}
\end{equation}
has a unique solution on the interval $[0,T]$. This result then extends to yield a unique solution on $[0,\infty)$.
\end{theorem}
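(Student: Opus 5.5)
The plan is to recast (2.1) as the fixed-point problem (2.3) for $\phi=x'$ and to show that the direct mapping $P$, defined by the right-hand side of (2.3), is a contraction on $G_E$ with respect to a Bielecki-type weighted norm. For $\gamma>0$ let $|\phi|_\gamma=\sup_{0\le t\le T}e^{-\gamma t}|\phi(t)|$. This norm is equivalent to the supremum norm on $[0,T]$. Hence $G_E$, being closed in $(E,\|\cdot\|)$, is also complete for $|\cdot|_\gamma$, and by hypothesis $P(G_E)\subset G_E$. Throughout, write
\[
X_\phi(t)=a+\int_0^t\phi(u)\,du,\qquad
\mathcal I_\phi(t)=\int_0^t A(t,s)\,v\bigl(t,s,X_\phi(s)\bigr)\,ds .
\]

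\textbf{Step 1 (a priori bounds on $G_E$).} For $\phi\in G_E$ we have $|X_\phi(t)|\le |a|+Tr_E$. Then (2.5) gives $|f(t,X_\phi(t))|\le |f(t,a)|+K(|a|+Tr_E)$, which is bounded on $[0,T]$, say by $F$, provided $t\mapsto f(t,a)$ is continuous. Next, comparing with the reference function $\phi_0$ and using (2.6)–(2.7),
\[
|\mathcal I_\phi(t)|\le |\mathcal I_{\phi_0}(t)|+\int_0^t|A(t,s)|L(t,s)\,|X_\phi(s)-X_{\phi_0}(s)|\,ds\le B+2MTr_E=:J ,
\]
where $B$ bounds $\mathcal I_{\phi_0}$, which is exactly the boundedness hypothesis on $\phi_0$. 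This is the only role of $\phi_0$: it supplies a uniform bound on the memory term over $G_E$.

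\textbf{Step 2 (contraction estimate).} For $\phi,\psi\in G_E$ the key inequality is
\[
|X_\phi(s)-X_\psi(s)|\le\int_0^s e^{\gamma u}e^{-\gamma u}|\phi-\psi|\,du\le \frac{e^{\gamma s}}{\gamma}|\phi-\psi|_\gamma\le\frac{e^{\gamma t}}{\gamma}|\phi-\psi|_\gamma\qquad(s\le t).
\]
Split $P\phi-P\psi$ into three pieces:
\begin{itemize}
\item the $g$-difference, bounded by (2.4);
\item $\bigl(f(t,X_\phi)-f(t,X_\psi)\bigr)\mathcal I_\phi$, bounded by (2.5) and $J$;
\item $f(t,X_\psi)\bigl(\mathcal I_\phi-\mathcal I_\psi\bigr)$, bounded by $F$ together with (2.6)–(2.7).
\end{itemize}
Combining these with the key inequality yields
\[
e^{-\gamma t}|(P\phi)(t)-(P\psi)(t)|\le\frac{\alpha+KJ+FM}{\gamma}\,|\phi-\psi|_\gamma .
\]
Choosing $\gamma>\alpha+KJ+FM$ makes $P$ a strict contraction. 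Banach's fixed point theorem then gives a unique fixed point $\phi\in G_E$. Setting $x=X_\phi$, $x$ is $C^1$ with $x'=\phi$ and solves (2.1).

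\textbf{Step 3 (uniqueness among all solutions and extension).} This is the step I expect to be the main obstacle, since the contraction argument only gives uniqueness inside $G_E$. Let $y$ be any solution of (2.1) on $[0,T]$. Then $y'$ is continuous, hence bounded, so both $x'$ and $y'$ lie in some larger ball $\|\cdot\|\le R$. Rerunning Step 2 with $r_E$ replaced by $R$ (only the constants $F$ and $J$ change) gives $|x'-y'|_\gamma\le c\,|x'-y'|_\gamma$ with $c<1$ for $\gamma$ large, so $x'=y'$ and hence $x=y$. For the extension to $[0,\infty)$: since $T$ is arbitrary, the solutions on $[0,T_1]$ and $[0,T_2]$ agree on the common interval by this uniqueness. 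Gluing them defines a unique solution on $[0,\infty)$.
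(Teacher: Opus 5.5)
Your proof is correct, but it follows a different route from the source the paper relies on. The paper itself gives no proof of this theorem; it quotes it from Burton \cite{burton2019progressive}, where it is obtained by progressive contractions. That argument keeps the plain supremum norm and splits $[0,T]$ into $n$ pieces of length $T/n$. It shows the direct mapping is a contraction on $[0,T/n]$, with factor $(\alpha+KJ+FM)T/n$, because there $|X_\phi-X_\psi|\le (T/n)\|\phi-\psi\|$. It then repeats on $[T/n,2T/n]$, $[2T/n,3T/n]$, and so on, with $\phi$ frozen at the fixed point already found on the earlier pieces; the Volterra (causal) structure of $P$ keeps those sets invariant.

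Your Bielecki norm $|\cdot|_\gamma$ does the same job in one stroke. The factor $1/\gamma$ plays the role of $T/n$, and the three estimates are the same: one from (2.4), one from (2.5) times the memory bound $J$, and one from $F$ times (2.7). This buys you a single application of Banach's theorem on all of $G_E$, with no subintervals and no invariant sets to build and check at each stage; note also that you never use convexity of $G_E$. The progressive version, in exchange, stays in the sup norm and builds the solution step by step on intervals whose length depends only on $\alpha,K,J,F,M$.

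Your Step 3 is a genuine addition. The contraction alone only gives uniqueness of the fixed point inside $G_E$. Passing to a ball containing $x'$ and $y'$, using $\phi_0$ to bound the memory term, and applying the weighted estimate to the two fixed points (no invariance needed) is exactly what justifies the word ``consequently'' for uniqueness among all $C^1$ solutions of (2.1).

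Minor points:
\begin{itemize}
\item The bound on $f$ can be taken as $|f(t,a)|+KTr_E$; your $|f(t,a)|+K(|a|+Tr_E)$ is still a valid upper bound.
\item Finiteness of $F$, and the fact that $P$ maps into $E$ at all, rest on continuity of $g,f,A,v$ (and measurability of $|A|L$). Burton assumes this, but the statement as quoted leaves it implicit.
\item The gluing to $[0,\infty)$ needs the hypotheses to hold for every $T$. It also uses the fact that the restriction of a solution on $[0,T_2]$ to $[0,T_1]$ is again a solution, which again follows from the Volterra structure.
\end{itemize}
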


To show the existence and uniqueness of the solution $I(t)$ of the integro-differential equation, observe that it is of the form given in Burton's theorem, by setting 
$$h(t, x(t))=\beta^*(M-x(t))x(t), \ \ f(t, x(t))=-\beta^*(M-x(t)), \ \ A(t,s)=g(t-s),\,\textrm{and }\,v(t, s, x(s))=x(s).$$
We  get
$$I'(t)=h(t,I(t)) + f(t,I(t))\int^t_0 g(t-s)I(s)ds $$

{ We first establish local existence and uniqueness using Burton’s theorem, and then extend the solution to arbitrary time intervals by a continuation argument.

Fix $T>0$, to be chosen later, and define the set $$G_T = \{ I \in C([0,T]) : 0 \le I(t) \le M \text{ for all } t\in[0,T], \, I(0) = I_0, \, I \text{ is non-decreasing} \},$$ and we define the operator $P$ on $G_T$ by:$$(PI)(t) = I_0 + \int_0^t \beta^* (M - I(s)) \left[ I(s) - \int_0^s g(s-\tau) I(\tau) d\tau \right] ds.$$

We prove that for $T$ small enough, $P(G_T)\subset G_T$. Let $I\in G_T$.

The derivative of the image is the integrand:$$(PI)'(t) = \beta^* (M - I(t)) \left[ I(t) - \int_0^t g(t-\tau) I(\tau) d\tau \right].$$ Since $I \in G_T$, we have $I(t) \le M$, so $(M-I) \ge 0$.

Moreover, since $I$ is non-decreasing in $G_T$, we have $I(\tau)\leq I(t)$ for all $\tau\leq t$. Thus, $$\int_0^t g(t-\tau) I(\tau) d\tau \le I(t) \int_0^t g(\tau) d\tau.$$ Since $\int_0^t g(\tau) d\tau \le 1$, it follows that $$ I(t) - \int_0^t g(t-\tau) I(\tau) d\tau \ge 0.$$ 

Therefore, $(PI)'(t) \ge 0$ and $(PI)$ is non-decreasing on $[0,T]$. Also, since $I_0\geq 0$, it follows that $(PI)(t)\geq(PI)(0)=I_0\geq 0$.

We now prove the upper bound. We already have that 
$$0\leq I(t) - \int^t_0g(t-\tau)I(\tau)d\tau\leq I(t) \leq M$$
and $M-I(t)\leq M.$  Using those two facts, we get
$$(PI)'(t) = \beta^* (M - I(t)) \left[ I(t) - \int_0^t g(t-\tau) I(\tau) d\tau \right] \leq \beta^* M^2.$$
Integrating on $[0, t]$ gives 
$$(PI)(t)\leq I_0 + \beta^* M^2 t.$$
Therefore, we choose $T$ such that $T\leq \frac{M-I_0}{\beta^* M^2}$ so that $(PI)(t)\leq M$ for every $t\in[0, T]$ and thus $P(G_T)\subset G_T$.

It remains to verify the Lipschitz assumptions in Burton’s theorem in order to conclude local existence and uniqueness on $[0,T]$.

Now, we verify that the Lipschitz condition for the coefficients hold in the main equation in System \eqref{syst1:IDE}.
\begin{enumerate}[(i)]
   \item 
   $\begin{array}{rl}
   \abs{h(t,I_1)-h(t,I_2)}
   &= \abs{\beta^*(M-I_1)I_1 - \beta^*(M-I_2)I_2} \\
   &= \beta^*\abs{MI_1 - I_1^2 - MI_2 + I_2^2} \\
   &= \beta^*\abs{M(I_1-I_2) - (I_1^2 - I_2^2)} \\ 
   &\leq \beta^*\left( M\abs{I_1-I_2} + \abs{I_1-I_2}\abs{I_1+I_2} \right) \\
   &\leq \beta^*(M+2M)\abs{I_1-I_2}, \qquad \forall I_1,I_2 \in [0,M].
   \end{array}$\\
   So, $\alpha_E = 3\beta^* M$. 

   \item 
   $\abs{f(t,I_1) - f(t, I_2)}
   = \abs{-\beta^*(M-I_1) + \beta^*(M-I_2)}
   = \beta^* \abs{I_1-I_2}$, so $K_E = \beta^*$.

   \item 
   $\abs{v(t, s, I_1) - v(t, s, I_2)}
   = \abs{I_1-I_2} \leq 1 \cdot \abs{I_1-I_2}$, so $L_E(t,s)=1$.

   \item 
   Let $L_E(t,s)=1$. Then
   $\displaystyle\int_0^t|A(t,s)|L_E(t,s)\;ds
   = \int_0^t g(t-s)\,ds
   = \int_0^t g(r)\,dr \le 1$, 
   $t \in [0, T]$, since $\displaystyle\int_0^\infty g(s)\,ds = 1$ and $g \in C(\mathbb{R}^+)$. 
   So, $M_E=1$.
\end{enumerate} 
Hence, by Theorem~\ref{Burton}, there exists a unique $C^1$ solution on $[0, T]$.

We now explain how to extend this local solution to arbitrary time intervals. Suppose that for some $T_1>0$, there exists a unique solution $I$ of our system on $[0,T_1]$. Set $I_1=I(T_1).$
We shall show that the solution can be extended uniquely to a slightly larger interval $[0,T_1+\delta]$.

Define the set
$$G_{T_1,\delta}=\left\{J \in C([T_1,T_1+\delta]) : I_1 \leq J(t)\leq M \text{ for all } t\in[T_1,T_1+\delta], \, J(T_1)=I_1, \, J \text{ is non-decreasing} \right\}.$$

For $J\in G_{T_1,\delta}$, define its extension $J_e$ to $[0,T_1+\delta]$ by
$$
J_e(t)=
\begin{cases}
I(t), & 0\leq t\leq T_1,\\
J(t), & T_1\leq t\leq T_1+\delta.
\end{cases}
$$

We define the continuation operator $P_e$ on $G_{T_1,\delta}$ by
$$
(P_eJ)(t)=I_1+\int_{T_1}^t \beta^*(M-J(s))\left[J(s)-\int_0^s g(s-\tau)J_e(\tau)\,d\tau\right]ds,
\qquad t\in[T_1,T_1+\delta].
$$

The problem is now identical to the previous local case, with $I_0$ replaced by $I_1$. Indeed, following the exact same steps, we obtain that for $\delta \leq \frac{M-I_1}{\beta^*M^2}$,
we have $P_e(G_{T_1,\delta})\subset G_{T_1,\delta}$. The Lipschitz conditions verified above depend only on the coefficients of the equation and remain valid on the translated interval $[T_1,T_1+\delta]$. Hence, Burton's theorem applies again and yields a unique solution on $[T_1,T_1+\delta]$, which extends the previous one.

We now iterate this construction. Define recursively
\[
T_{n+1} = T_n + \delta_n, 
\qquad 
\delta_n = \frac{M - I(T_n)}{\beta^* M^2}.
\]
It remains to show that $T_n \to +\infty$.

Using the fact that $0 \le \int^t_0g(t-\tau)I(\tau)d\tau \le I(t) \le M$, we get
\begin{align*}
    I'(t) &= \beta^*(M - I(t))\int^t_0g(t-\tau)I(\tau)d\tau \\
    &\le \beta^* M (M - I(t)) \\
    & \le \beta^* M S(t).
\end{align*}    
Since $S(t) = M - I(t)$, we also have
\[
S'(t) = -I'(t) \ge -\beta^* M S(t).
\]
Multiplying by $e^{\beta^* M t}$ yields
\[
e^{\beta^*Mt}S'(t)+\beta^*Me^{\beta^*Mt}S(t)\geq 0
\]

\[
\frac{d}{dt}\bigl(e^{\beta^* M t} S(t)\bigr) \ge 0,
\]
so $e^{\beta^* M t} S(t)$ is non-decreasing and
\[
S(t) \ge S(0) e^{-\beta^*Mt}=(M - I_0) e^{-\beta^* M t}.
\]
Evaluating at $t = T_n$, we get
\[
M - I(T_n) \ge (M - I_0) e^{-\beta^* M T_n}.
\]
Hence,
\[
\delta_n = \frac{M - I(T_n)}{\beta^* M^2} \ge \frac{M - I_0}{\beta^* M^2} e^{-\beta^* M T_n}.
\]

Setting $a = \beta^* M$ and $c = \frac{M - I_0}{\beta^* M^2} > 0$, we now have $
T_{n+1} \ge T_n + c e^{-a T_n}.
$

Therefore,
\[
e^{a T_{n+1}} = e^{a T_n} e^{a \delta_n} \ge e^{a T_n}(1 + a \delta_n) \ge e^{a T_n} + ac = e^{a T_0} + n ac \to \infty ,
\]
which implies $T_n \to \infty$.

Thus, the continuation times diverge, and the solution extends uniquely to every interval $[0,T_f]$, and hence to $[0,\infty)$.

This completes the proof of existence and uniqueness.
\begin{rem}
Alternatively, the global existence can be viewed through the lens of a continuation argument by contradiction. Indeed, having shown the existence and uniqueness on $[0,T_0]$, with $T_0= \frac{M-I_0}{\beta M^2}$, let $T_{max}$ be the maximal time for which there is a unique solution on $[0,T_{max})$, so $T_{max}\geq T_0>0$, and the solution is increasing and bounded between $0$ and $M$. Assume $T_{max}<\infty$. Since $I(t)$ is monotone increasing and bounded above by $M$, then $L:=\lim_{t\to T_{max}^-}I(t)$ exists and $L\leq M$.  We identify 2 cases:
\begin{itemize}
    \item[1)] If $L=M$, then %there is $t^*\in(0,T_{max})$ such that $I(t^*)= M$, since $I$ is increasing and less than M, then $I(t) = M$ for all $t\in[t^*,T_{max})$. In this case, 
    we can extend $I(t)$ on $[0,+\infty)$, $\left( \text{namely } I(t)=M \text{ for } t\geq T_{max}\right)$,  and it will be the solution of the IDE since $I'(t) = 0$ and $M-I(t)=0$ for $t\geq T_{max}$.

    \item[2)] If $L < M$, then since $I\in C^1$ and $I_0>0$, and $I$ increasing, then $I(T_{max}) = \max_{t\in[0,T_{max}]}I(t)  = \sup_{t\in[0,T_{max}]}I(t)< M$ and $I'(T_{max}) >0$. In this case, we can use the local existence theorem to extend the solution outside $[0,T_{max}]$, i.e. on $[0,T_{max}+\delta]$ with $\delta>0$. This is a contradiction to the maximality of $T_{max}$. So, $T_{max}=+\infty$.\\ 
    Note that here, $I'(T_{max})$ is defined and finite because the ``memory" integral term involves the $C^\infty$ log-normal kernel $g$ which is integrable over the finite interval $[0,T_{max}]$.
\end{itemize} 
In both cases, we conclude that $T_{max}=+\infty$.\\
This is a qualitative approach showing that there is no finite time singularity.
\end{rem}}
%\begin{rem}Alternatively, the global existence can be viewed through the lens of a continuation argument by contradiction. Indeed, having shown the existence and uniqueness on $[0,T_0]$, with $T_0= \frac{M-I_0}{\beta M^2}$, let $T_{max}$ be the maximal time for which there is a unique solution on $[0,T_{max})$, so $T_{max}\geq T_0>0$, and the solution is increasing and bounded between $0$ and $M$. Assume $T_{max}<\infty$. Since $I(t)$ is monotone increasing and bounded above by $M$, then $L:=\lim_{t\to T_{max}^-}I(t)$ exists and $L\leq M$.  We identify 2 cases:
%\begin{itemize}
%   \item[1)] If $L=M$, then %there is $t^*\in(0,T_{max})$ such that $I(t^*)= M$, since $I$ is increasing and less than M, then $I(t) = M$ for all $t\in[t^*,T_{max})$. In this case, 
    %we can extend $I(t)$ on $[0,+\infty)$, $\left( \text{namely } I(t)=M \text{ for } t\geq T_{max}\right)$,  and it will be the solution of the IDE since $I'(t) = 0$ and $M-I(t)=0$ for $t\geq T_{max}$. This means that $T_{max} = +\infty$.
%    \item[2)] For all $t\in[0,T_{max})$, $I(t) < M$. Assume $T_{max} < +\infty$: then since $I\in C^1$ and $I_0>0$, and I increasing, then $I(T_{max}) = \max_{t\in[0,T_{max}]}I(t)  = \sup_{t\in[0,T_{max}]}I(t)< M$ and $I'(T_{max}) >0$. In this case, we can extend the solution outside $[0,T_{max}]$, i.e. on $[0,T_{max}+\delta]$ with $\delta>0$. This is a contradiction to the maximality of $T_{max}$. So, $T_{max}=+\infty$.
%\end{itemize} 

%This is a qualitative approach showing that there is no finite time singularity.
%\end{rem}}

Having established the existence and uniqueness of a $C^1$ solution for System \eqref{syst1:IDE}, we prove herein some properties of this solution to complete the proof of the theorem.
 
 \begin{proposition}[Monotonicity]\label{prop:Iinc}
     If $0 < I_0 < M$, then $I'(t) > 0$ for all $t$ and, if $I_0 = 0$ or $M$ then $I' = 0$.
 \end{proposition}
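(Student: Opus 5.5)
Write $I'(t)=\beta^*(M-I(t))\,I_a(t)$ with $I_a(t)=I(t)-\int_0^t g(t-s)I(s)\,ds$. The plan is to show that both factors are strictly positive for every $t\ge 0$ when $0<I_0<M$, and to treat the degenerate cases $I_0=0$ and $I_0=M$ by checking directly that constants solve the equation.

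\textbf{Degenerate cases.} If $I_0=M$, the constant function $I\equiv M$ satisfies the equation, because the factor $M-I$ vanishes. By the uniqueness part of Theorem~\ref{thm:exist_IDE} (Burton's theorem), it is the solution, so $I'=0$. If $I_0=0$, then $I\equiv 0$ gives $I_a\equiv 0$, so it is again a solution, and uniqueness gives $I'=0$.

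\textbf{Nondegenerate case: first factor.} The existence proof produced a solution in the class $G_T$ of non-decreasing functions with values in $[0,M]$, extended step by step to all of $[0,\infty)$. Hence $I$ is non-decreasing and $I(t)\ge I_0>0$. From the continuation step we also have $S(t)\ge (M-I_0)e^{-\beta^*Mt}>0$, so $M-I(t)>0$ for all $t$.

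\textbf{Nondegenerate case: second factor.} By monotonicity,
\[
\int_0^t g(t-s)I(s)\,ds\le I(t)\int_0^t g(r)\,dr .
\]
Therefore
\[
I_a(t)\ge I(t)\Big(1-\int_0^t g(r)\,dr\Big)=I(t)\,\mu(t),
\]
where $\mu(t)=\int_t^\infty g(r)\,dr$. The log-normal density is strictly positive on $(0,\infty)$, so $\mu(t)>0$ for every finite $t$. Combined with $I(t)\ge I_0>0$, this gives $I_a(t)>0$. At $t=0$ it reads $I_a(0)=I_0>0$ directly. Multiplying the two strictly positive factors yields $I'(t)>0$.

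\textbf{Main obstacle.} The delicate point is that this argument uses the monotonicity of $I$, which was only built in by restricting to the invariant set $G_T$. I must make sure the unique solution really coincides with the one in $G_T$ on every continuation interval, i.e.\ that there is no circularity. Uniqueness on the whole interval settles this: the constructed non-decreasing solution is the solution. If needed, I will give an independent check by a first-time argument. Let $t^*$ be the first time with $I'(t^*)=0$. Then $I$ is increasing on $[0,t^*]$, the bound $I_a(t^*)\ge I(t^*)\mu(t^*)>0$ holds, and $M-I(t^*)>0$, which contradicts $I'(t^*)=0$.
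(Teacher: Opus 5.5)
Your proof is correct. The degenerate cases match the paper's, but for $0<I_0<M$ you take a different route.

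The paper argues by contradiction at the first zero $t^*$ of $I'$. It excludes $I(t^*)=M$ by invoking uniqueness: the constant $M$ would be ``the'' solution through $(t^*,M)$, forcing $I_0=M$. It then excludes $I_a(t^*)=0$ by an integration by parts. Using $I'>0$ on $(0,t^*)$, that step produces exactly your inequality $\int_0^{t^*}g(t^*-s)I(s)\,ds\le I(t^*)\int_0^{t^*}g<I(t^*)$.

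You instead show directly that both factors are positive for every $t$:
\begin{itemize}
\item Monotonicity is inherited from the invariant class $G_T$ used in the construction, and you rightly note that uniqueness identifies the constructed solution with the solution.
\item This gives $I_a(t)\ge I(t)\mu(t)>0$ with no integration by parts.
\item $M-I(t)>0$ follows from the bound $S(t)\ge S_0e^{-\beta^*Mt}$ already obtained in the continuation step.
\end{itemize}

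What your route buys is that it avoids the paper's weakest step. Deducing $I_0=M$ from $I(t^*)=M$ is a backward-uniqueness claim for an equation with memory, which Burton's forward result does not supply. Your exponential lower bound settles this cleanly. Even simpler is $S(t)=S_0\exp\bigl(-\beta^*\int_0^t I_a(s)\,ds\bigr)>0$, since $S=M-I$ solves a linear equation with continuous coefficient.

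What the paper's route buys is independence from how the solution was built: it uses only that $I$ is a $C^1$ solution. Your fallback first-time argument recovers that independence. Combined with the explicit formula for $S$, that version is the most robust proof of the proposition.
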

 \begin{proof}
         We identify two cases:
    
    If $0<I_0<M$, then $I'(0)>0$, and by continuity, there is an interval where the derivative is positive, so $I(t)$ is increasing on some interval. Let $t^*$ be the first instant of time when $I'(t^*)=0$. If $I(t^*) = M$, then $I(t)=M \quad \forall t$ is the unique solution for the initial condition $I(t^*)=M$, and hence $I_0 = I(0) = M > I_0$. This is a contradiction. Therefore $I(t^*) \neq M$, implying the second term is zero:
    $$I(t^*) = \int_0^{t^*} g(t^*-s)I(s)ds$$ However, using an integration by parts argument:
    \begin{align*}
        I(t^*) &= \int_0^{t^*} g(t^*-s)I(s)ds \\
        &= \left. \int_0^s g(t^*-u)du \;I(s)\right]_0 ^{t^*} - \int_0^{t^*} \int_0^s g(t^*-u)du \; I'(s)ds \\
        &=  I(t^*)\int_0^{t^*} g(t^*-u)du  - \int_0^{t^*} \int_0^s g(t^*-u)du \; I'(s)ds \\
        %&\leq \left. \int_0^s g(t^*-u)du \;I(s)\right]_0^{t^*} \\
        &\leq \int_0^{t^*} g(t^*-s)ds \;I(t^*) \text{ since }I'(s)>0 \text{ for }0<s<t^* \\
        &= \int_0^{t^*} g(s)ds \;I(t^*) \\
        &< I(t^*)\text{ since }\int_0^{t^*} g(s)\;ds<1, 
    \end{align*}
    which is a contradiction. So, $I'(t)>0, \forall t$ for $I_0 \neq 0$. \\

    If $I_0 = 0$ or $I_0 = M$, then the unique solution for this integro-differential equation is respectively $I(t) = 0$ or $I(t)=M$. In both cases, $I'(t) = 0$.\\

 \end{proof}

 \begin{proposition}[Positivity and Boundedness]\label{prop:Ibound}\;\\
  \begin{enumerate}
    \item If $I_0 > 0$, then $I(t) > 0$ for all $t$, and if $I_0 = 0$, then $I(t) = 0$.
    \item If $I_0 < M$, then $I(t) < M$ for all $t$, and if $I_0 = M$, then $I(t) = M$.
\end{enumerate}
     
 \end{proposition}

\begin{proof}\;\\
    \begin{enumerate}
    \item The case $I_0 = 0$ gives us the unique solution $I(t) = 0$. For the case $I_0>0$: Since we start with a positive initial condition $I(0)=I_0>0$, and we have that $I(t)$ is always increasing by Proposition \ref{prop:Iinc}, we conclude that the solution is always 
    strictly positive.\\

    \item The case $I_0 = M$ gives us the unique solution $I(t) = M$. The case $I_0<M$: Assume there exists $t^*$ such that $I(t^*)=M$. Consider the initial value problem with initial condition $I(t^*)=M$, then $J(t)=M$ is a solution for this IVP as well as the solution $I(t)$ which contradicts the uniqueness of the solution. \\
    \end{enumerate}
\end{proof}
\begin{proposition}[Positivity of $I_a$]\label{prop:IaPos}
 If $0 < I_0 \leq M$, then $I_a(t) > 0$ for all $t\geq 0$, and if $I_0 = 0 $, then $I_a = 0$.
\end{proposition}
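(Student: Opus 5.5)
The plan is to rewrite $I_a$ in the ``memory'' form that appeared in the derivation of the continuous model, namely
\begin{equation}
I_a(t)=\mu(t)I_0+\int_0^t \mu(t-s)\,I'(s)\,ds, \qquad \mu(t)=\int_t^{\infty} g(r)\,dr ,
\end{equation}
and then read off the sign from the sign of each term. This identity is just the integration by parts already used when passing from \eqref{Sys_SFC1c} to \eqref{Sys_SFC1d}, now read backwards.

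\textbf{Step 1 (the identity).} Set $G(r)=\int_0^r g(u)\,du = 1-\mu(r)$. Since $\frac{d}{ds}\bigl(-G(t-s)\bigr)=g(t-s)$ and $I\in C^1$ by Theorem~\ref{thm:exist_IDE}, integration by parts gives
\begin{equation}
\int_0^t g(t-s)I(s)\,ds = G(t)I_0+\int_0^t G(t-s)\,I'(s)\,ds ,
\end{equation}
where we used $G(0)=0$. Next write $I(t)=I_0+\int_0^t I'(s)\,ds$ and subtract. Since $1-G=\mu$, this yields the identity displayed above. The integral converges because $G$ is bounded by $1$ and $I'$ is continuous on $[0,t]$.

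\textbf{Step 2 (positivity of $\mu$).} The lognormal density \eqref{eq:g:log-normal} is strictly positive on $(0,\infty)$. Hence for every $t\ge 0$,
\begin{equation}
\mu(t)=\int_t^\infty g(r)\,dr>0 .
\end{equation}
This is the only place where a property of the kernel beyond $\int_0^\infty g=1$ is needed, so I expect it to be the key point. For a compactly supported kernel the claim would fail at large $t$.

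\textbf{Step 3 (cases).}

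If $0<I_0<M$, Proposition~\ref{prop:Iinc} gives $I'(s)>0$ for all $s$. The integral term in the identity is therefore nonnegative, and $\mu(t)I_0>0$, so $I_a(t)>0$.

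If $I_0=M$, Proposition~\ref{prop:Ibound} gives $I\equiv M$. Then $I'\equiv 0$ and $I_a(t)=M\mu(t)>0$.

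If $I_0=0$, then $I\equiv 0$, and hence $I_a\equiv 0$.

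The only potential obstacle is regularity of $g$ at $0$ for the integration by parts. This is harmless: we integrate against $G$, which is absolutely continuous with $G'=g\in L^1$, and the computation never uses $g(0)$ or $g'$.
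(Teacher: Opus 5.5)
Your argument is correct. For $I_0=0$ and $I_0=M$ it coincides with the paper's, which also computes $I_a(t)=M\int_t^\infty g(s)\,ds$. In the main case $0<I_0<M$ you take a different route.

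The paper reads off the equation $I'=\beta^\star(M-I)I_a$ and divides. It uses $I'>0$ from Proposition~\ref{prop:Iinc} and the strict bound $M-I>0$ from Proposition~\ref{prop:Ibound}. You instead pass through the memory representation $I_a(t)=\mu(t)I_0+\int_0^t\mu(t-s)I'(s)\,ds$. This is the same integration by parts the paper performs inside the proof of Proposition~\ref{prop:Iinc}, and again when showing $\int_0^\infty I_a\,dt<\infty$.

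The paper's version is a one-liner once the two earlier propositions are available. It also makes plain that positivity of $I_a$ just restates the strict monotonicity of $I$ while $I<M$. Yours buys several things:
\begin{itemize}
\item it needs only $I'\ge 0$;
\item it never uses $I<M$ in the main case;
\item it handles $0<I_0<M$ and $I_0=M$ with one identity;
\item it gives the quantitative bound $I_a(t)\ge\mu(t)I_0$.
\end{itemize}
In fact, a first-time argument using only $0\le I\le M$ would let your identity deliver $I_a>0$ without appealing to Proposition~\ref{prop:Iinc} at all.

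One correction to your side remark in Step~2: the full support of the log-normal kernel is essential only for $I_0=M$, where $I_a=M\mu(t)$. For $0<I_0<M$ and $t>0$, strictness already comes from the integral term. Indeed $\mu(r)>0$ for $r$ near $0$ for any density, and $I'>0$. So with a compactly supported kernel the claim would fail only in the boundary case $I_0=M$.
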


 \begin{proof}
We identify three cases: \\

    If $I_0 = 0$, then since the unique solution is $I(t) = 0$, $\forall t>0$, one gets $$I_a(t) = I(t)-\int_0^t g(t-s)I(s)ds = 0\;\;\forall t>0.$$
    
Similarly, if $I_0 = M$, then the unique solution is $I(t) = M$, $\forall t>0$. This implies that 
$$I_a(t) = I(t)-\int_0^t g(t-s)I(s)ds = M(1-\int_0^t g(t-s)ds) = M\int_t^\infty g(s)ds>0$$ 

    If $0<I_0<M$, we have $I'(t)=(M-I)I_a>0$ by Proposition \ref{prop:Iinc}. We have $M-I>0$ by Proposition \ref{prop:Ibound}, hence it follows that $$I_a = I(t) - \int_0^t g(t-s)I(s)ds > 0.$$
\end{proof} 
\begin{corollary}[Asymptotic Behavior of $I$]
    $\displaystyle\lim_{t\rightarrow \infty} I(t):=I_\infty \in\RR$ .\\
\end{corollary}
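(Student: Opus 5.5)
The plan is to prove the corollary by the monotone convergence theorem for real functions, using the monotonicity and boundedness results already established.

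First, split according to the initial datum. If $I_0=0$ or $I_0=M$, then Propositions \ref{prop:Iinc} and \ref{prop:Ibound} give that $I$ is constant, equal to $0$ or to $M$. The limit then exists trivially, with $I_\infty=I_0$.

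In the main case $0<I_0<M$, Proposition \ref{prop:Iinc} gives $I'(t)>0$ for all $t\ge 0$, so $t\mapsto I(t)$ is strictly increasing on $[0,\infty)$. Proposition \ref{prop:Ibound} gives $0<I(t)<M$ for all $t$, so $I$ is bounded above by $M$. A monotone nondecreasing function on $[0,\infty)$ that is bounded above has a finite limit at $+\infty$, namely its supremum. Concretely, set $I_\infty:=\sup_{t\ge0}I(t)\le M$. For $\varepsilon>0$, pick $t_\varepsilon$ with $I(t_\varepsilon)>I_\infty-\varepsilon$. Monotonicity then gives $I_\infty-\varepsilon<I(t)\le I_\infty$ for all $t\ge t_\varepsilon$. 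Hence $\lim_{t\to\infty}I(t)=I_\infty\in\RR$, and moreover $I_0<I_\infty\le M$.

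There is no real obstacle, since all the work lies in the earlier propositions. The only point needing care is that global existence on $[0,\infty)$ from Theorem \ref{thm:exist_IDE} is required so that $I(t)$ is defined for all $t$; that has already been established.

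The statement only asks for $I_\infty\in\RR$, so I would not prove $I_\infty<M$ here. If one wanted the strict inequality claimed in Theorem \ref{thm:exist_IDE}, I would argue as follows. Integrate $S'/S=-\beta^* I_a$ to get $\ln(S(t)/S_0)=-\beta^*\int_0^t I_a$. Then bound $\int_0^\infty I_a<\infty$ using $I_a=I-g*I$, the bound $I\le M$, and integrability of $\mu(t)=\int_t^\infty g$, which holds because the log-normal distribution has a finite mean. This yields $S_\infty>0$, but it goes beyond the corollary as stated.
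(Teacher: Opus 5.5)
Your proof is correct and follows the paper's argument: Propositions~\ref{prop:Iinc} and~\ref{prop:Ibound} make $I$ increasing and bounded by $M$, so the limit exists by monotone convergence, and you additionally write out the supremum/$\varepsilon$ step and the constant cases $I_0\in\{0,M\}$. In your optional aside on $I_\infty<M$, bounding $\int_0^\infty I_a\,dt$ needs $I'\ge 0$ as well as $I\le M$ and $\mu\in L^1$: it controls the memory part $\int_0^t g(t-s)\bigl(I(t)-I(s)\bigr)\,ds$, whose time integral is at most $m\,(I_\infty-I_0)$ with $m=\int_0^\infty l\,g(l)\,dl$.
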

\begin{proof}
    As a consequence of Propositions \ref{prop:Iinc} and \ref{prop:Ibound}, the solution $I(t)$ is an increasing bounded function. Therefore, $\displaystyle\lim_{t\rightarrow \infty} I(t):=I_\infty$ exists.\\
\end{proof}
%Assume the existence of a unique solution $S(t)=S_0 e^{-\int_{0}^{t}\beta I_a(s)ds}$.

\begin{proposition}[Asymptotic Behavior of $I'$]
    $\displaystyle\lim_{t\rightarrow \infty} I'(t)=0$
\end{proposition}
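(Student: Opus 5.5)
We want $I'(t)\to 0$ as $t\to\infty$. By the preceding Corollary, $I(t)\to I_\infty\le M$ with $I$ increasing. Since $I(t)=I_0+\int_0^t I'(s)\,ds$ and $I'>0$ (Proposition~\ref{prop:Iinc}), we get $\int_0^\infty I'(s)\,ds = I_\infty - I_0<\infty$. An integrable nonnegative function need not tend to zero, however. The plan is to show that $I'$ is uniformly continuous on $[0,\infty)$ and then apply Barbalat's lemma, which says a uniformly continuous function with convergent improper integral tends to $0$.

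The direct route is to show that $I'$ is the difference of two functions that each have a limit, namely $I'(t)=\beta^*(M-I(t))\bigl(I(t)-R(t)\bigr)$ with $R(t)=\int_0^t g(t-s)I(s)\,ds$. The factor $M-I(t)\to M-I_\infty$ and $I(t)\to I_\infty$. For the memory term, write $R(t)=\int_0^t g(u)I(t-u)\,du$. For each fixed $u$, $g(u)I(t-u)\mathbf{1}_{u\le t}\to g(u)I_\infty$, and this expression is dominated by $Mg(u)\in L^1(0,\infty)$. Dominated convergence then gives $R(t)\to I_\infty\int_0^\infty g=I_\infty$.

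Putting these together, $I_a(t)=I(t)-R(t)\to I_\infty-I_\infty=0$. Since $M-I(t)$ stays bounded by $M$, it follows that $I'(t)=\beta^*(M-I(t))I_a(t)\to 0$. This argument does not even need Barbalat's lemma: the limit of $I'$ exists and equals $0$, and it is consistent with integrability.

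The main step to check carefully is the dominated convergence for the convolution. It uses only that $g\ge0$, that $\int_0^\infty g=1$ (the kernel properties listed earlier), and that $0\le I\le M$ (Proposition~\ref{prop:Ibound}). No regularity of $g$ near $0$ is needed, which matters because the log-normal kernel is merely integrable there. As a fallback, if one prefers to avoid computing the limit of $R$, one can instead bound $|I'(t)-I'(\tau)|$ using the Lipschitz continuity of $I$ (since $|I'|\le\beta^*M^2$) together with the $L^1$-continuity of translations of $g$. This gives uniform continuity of $I'$, and Barbalat's lemma concludes.
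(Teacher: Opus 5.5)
Your main argument is correct and is essentially the paper's proof. You rewrite the memory term as $\int_0^t g(u)I(t-u)\,du$, apply dominated convergence with dominating function $Mg$ to get $I_a(t)\to 0$, and conclude because $M-I(t)$ is bounded; as you note yourself, the Barbalat detour is unnecessary. One side remark is inaccurate but harmless to the proof: the log-normal kernel is not ``merely integrable'' near $0$, since $g(t)\to 0$ as $t\to 0^+$ and it is smooth there.
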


\begin{proof}
    We examine $\displaystyle\lim_{t \to \infty} I'(t)=\beta (M-I_{\infty})\left(I_{\infty}-\lim_{t \to \infty} \int_{0}^{t}g(t-s)I(s)ds\right)$. 

To show that the derivative goes to zero, we will show that $$\lim_{t \to \infty} \int_{0}^{t}g(t-s)I(s)ds = I_{\infty}.$$

Notice that
\begin{align*}
\int_{0}^{t} g(t-s)I(s)ds&=\int_{0}^{t}g(s)I(t-s)ds \ \ \ \ \ \text{by a change of variables}\\
 &= \int_{0}^{\infty}\indicator{[0,t]}g(s)I(t-s)ds\\
 %&\underset{t \to \infty}{\longrightarrow} \int_{0}^{\infty}%g(s)I_{\infty}ds\\&=I_{\infty} 
\end{align*}
%using the fact that $\int_{0}^{\infty} g(t-s)ds=1$.\\

Now, let $f_t(s)=\indicator{[0,t]}g(s)I(t-s)$ and notice then that $0\leq f_{t}(s) \leq Mg(s) \quad \forall s \geq 0$ and $\displaystyle\lim_{t \to \infty}f_{t}(s)=g(s)I_{\infty}:=f$. Since $\displaystyle\lim_{n \to \infty} f_n = f$ and $\lvert f_n \rvert < g$ where $\displaystyle\int_{0}^{\infty} g dx < \infty$, we have by Lebesgue's Dominated Convergence Theorem,  $\displaystyle\lim_{n \to \infty}\int_{0}^{\infty}f_{n}dx=\int_{0}^{\infty}\lim_{n \to \infty}f_{n}dx$. Hence, 
$$\int_{0}^{t} g(t-s)I(s)ds\underset{t \to \infty}{\longrightarrow} \int_{0}^{\infty}g(s)I_{\infty}ds =I_{\infty},$$
using the fact that $\displaystyle\int_{0}^{\infty} g(s)ds=1$.
\end{proof}
Clearly, since $I_a(t)>0$ for all $t$, we have then $S'(t)<0$ for all $t$, hence $S(t)$ is a decreasing function. 
Integrating the ODE of the function $S$ on the interval $[0,t]$, we obtain $\displaystyle S(t) = S_0 e ^{-\gamma \int_0^t I_a(s)ds}$ for all $t\geq 0$.\\
Consequently, the limit as $t\rightarrow+\infty$ of $S(t)$, denoted by $S_\infty$, is given by $\displaystyle S_\infty=S_0 e ^{-\gamma \int_0^\infty I_a(t)dt}$.

We claim that $S_\infty > 0$. This would imply that a certain herd immunity is achieved, since, if $S_\infty$ was to become 0, we would then have that $I_\infty=M-S_\infty=M-0=M$ i.e. all of the population would become infected. We show this claim by the following lemma:
\begin{proposition}[Asymptotic Behaviour of $S$]
       $\displaystyle \int_0^\infty I_a(t)dt$ converges.
\end{proposition}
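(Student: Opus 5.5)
The plan is to exploit the relation $I'(t)=\beta^*(M-I(t))I_a(t)$ together with the fact, proved in Proposition \ref{prop:Ibound}, that $M-I(t)$ stays away from zero. Since $I$ is increasing (Proposition \ref{prop:Iinc}) and $I(t)<M$ with limit $I_\infty$, we have $M-I(t)\ge M-I_\infty$ for all $t$. The first step is therefore to show $I_\infty<M$, i.e.\ $S_\infty>0$, by a route that does not presuppose the claim.

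For this, use the exponential lower bound already derived in the continuation argument of the existence proof: $S(t)\ge S_0e^{-\beta^*Mt}$. This alone does not prevent $S\to 0$ as $t\to\infty$, so a sharper argument is needed. I would instead use $S'=-\beta^*SI_a$ and bound $I_a$ directly. Note that $I_a(t)=I(t)-\int_0^tg(t-s)I(s)\,ds=I(t)\mu(t)+\int_0^t g(t-s)\bigl(I(t)-I(s)\bigr)ds$, using $\int_0^t g(t-s)\,ds=1-\mu(t)$. Integrating over $[0,T]$ and applying Fubini, the first term gives $\int_0^T I\mu\le M\int_0^\infty\mu(t)\,dt=M\int_0^\infty s\,g(s)\,ds$, which is the mean incubation period. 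For the log-normal kernel \eqref{eq:g:log-normal} this mean is finite, equal to $e^{\mu+\sigma^2/2}$. For the second term, write $I(t)-I(s)=\int_s^t I'(r)\,dr$ and swap the integrals. This yields $\int_0^T\!\int_0^t g(t-s)\int_s^tI'(r)\,dr\,ds\,dt\le \int_0^T I'(r)\int_0^\infty u\,g(u)\,du\,dr\le M\int_0^\infty u\,g(u)\,du$.

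Combining the two bounds gives $\int_0^T I_a\le 2M\,\EExp[dS]$ uniformly in $T$. Since $I_a>0$ by Proposition \ref{prop:IaPos}, the integral $\int_0^\infty I_a$ converges by monotone convergence. Then $S_\infty=S_0e^{-\beta^*\int_0^\infty I_a}>0$ follows at once. This route avoids needing $I_\infty<M$ beforehand, so the first step above becomes a consequence rather than a prerequisite.

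The main obstacle is the Fubini step in the second term, namely getting the exchange of order right. For fixed $r$, the set of pairs $(s,t)$ with $s\le r\le t\le T$ contributes $\int_{s\le r}\int_{t\ge r}g(t-s)\,dt\,ds\le\int_0^\infty u\,g(u)\,du$. One also has to check the finiteness of the first moment of $g$. For the log-normal kernel this is a standard computation.
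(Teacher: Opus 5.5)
Your argument is correct and is essentially the paper's proof. Your splitting $I_a(t)=I(t)\mu(t)+\int_0^t g(t-s)\bigl(I(t)-I(s)\bigr)ds$ with $I(t)-I(s)=\int_s^t I'(r)\,dr$ becomes, after one application of Fubini, exactly the paper's integration-by-parts identity $I_a(t)=I(t)\int_t^\infty g(l)\,dl+\int_0^t I'(s)\int_{t-s}^t g(l)\,dl\,ds$; both routes then use Tonelli to bound each piece by the finite log-normal mean $m=e^{\mu+\sigma^2/2}$ (times $M$ and times $I_\infty-I_0$, respectively). Your direct bound of the inner double integral by $m$ is slightly cleaner than the paper's bound $m+\sup_s s\int_s^\infty g(l)\,dl$, and, as you note yourself, the opening paragraph about $M-I(t)$ staying away from zero is not needed.
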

Note that if it diverges, then we would obtain $S_\infty=S_0e^{-\infty}=0$. \\

\begin{proof}
We have $\displaystyle I_a(t) = I(t) - \int_0^t g(t-s)I(s)ds$.
Choose $T \in \RR$. By a change of variables, let $\displaystyle v(s)=\int_0^s g(T-t)dt=\int_{T-s}^T g(l)dl, \ \ 0 \leq t \leq s \leq T$. \\ 
Using integration by parts, rewrite 
\begin{align*}
\int_0^T g(T-s)I(s)ds &= I(s)v(s)\big]_0^T - \int_0^T I'(s)v(s)ds \\
                        &= I(T)\int_0^T g(l)dl - \int_0^T I'(s)v(s)ds 
\end{align*}
Then $I_a(t)$ becomes
\begin{align*}
I_a(t)&=I(t)-I(t)\int_0^t g(l)dl + \int_0^tI'(s)v(s)ds \\
                    &= I(t)\big[1-\int_0^t g(l)dl \big] + \int_0^tI'(s)v(s)ds \\
                    &= I(t)\big[\int_0^\infty g(t)dt - \int_0^t g(t)dt\big]  + \int_0^tI'(s)v(s)ds\\
                    &= I(t) \int_t^\infty g(l)dl + \int_0^t I'(s) \int_{t-s}^t g(l)dl
\end{align*}
Taking the improper integral of $I_a(t)$, we obtain
\begin{align*}
\int_0^\infty I_a(t)dt &= \int_0^\infty I(t) \int_t^\infty g(l)dl dt + \int_0^\infty \int_0^t I'(s)\int_{t-s}^t g(l)dl ds dt
\end{align*}
We will need to show then that both $\displaystyle (1) \int_0^\infty I(t) \int_t^\infty g(l)dl dt$ and  $\displaystyle (2) \int_0^\infty \int_0^t I'(s)\int_{t-s}^t g(l)dl ds dt$ converge. \\
\begin{enumerate}
    \item $\displaystyle \int_0^\infty I(t) \int_t^\infty g(l)dl dt $ converges.\\
   First, we claim that $\displaystyle\int_0^\infty\int_t^\infty g(l)dl dt=m$, where $m$ is the mean of the log-normal random variable, i.e. 
   $\displaystyle m:= \int_0^\infty l g(l) dl = \exp\left(\mu + \frac{\sigma^2}{2}\right)$. \\
    It is sufficient to prove $\displaystyle\lim_{a \to \infty} \int_0^a \int_t^a g(l)dldt = m$.  Indeed,
    \begin{align*}
        m &= \lim_{a \to \infty} \int_0^a sg(s)ds \\
        &= \lim_{a\to \infty} \left[ s G(s)\Big|_0^a - \int_0^a G(s)ds \right]\\
        &= \lim_{a \to \infty} \left[aG(a) - \int_0^a G(s)ds\right] \\
        &= \lim_{a \to \infty} \left[\int_0^a G(a) - \int_0^a G(s)ds\right] \\
        &= \lim_{a \to \infty} \int_0^a \left[G(a)-G(s)\right]ds \\
        &= \lim_{a \to \infty} \int_0^a \int_s^a g(t)dt ds
    \end{align*}
Consequently, since $I(t)\leq M$ and $g(t)\geq 0$, for all $t$, then $\displaystyle \int_0^\infty I(t) \int_t^\infty g(l)dl dt\leq  \int_0^\infty M \int_t^\infty g(l)dl dt = M\cdot m < +\infty$.

    \item $\displaystyle \int_0^\infty \int_0^t I'(s)\int_{t-s}^t g(l)dl ds dt$ converges. \\

First, by Tonelli's theorem,
\[
\int_0^\infty \int_0^t I'(s) \int_{t-s}^t g(l) \, dl \, ds \, dt
= \int_0^\infty I'(s) \left( \int_s^\infty \int_{t-s}^t g(l) \, dl \, dt \right) ds.
\]

Fix \(s > 0\). Changing the order of integration for the inner integrals,
\[
\int_s^\infty \int_{t-s}^t g(l) \, dl \, dt = \int_0^\infty g(l) \left( \int_{\max(s, l)}^{l+s} dt \right) dl.
\]

Evaluating the inner integral over \(t\),
\[
\int_{\max(s,l)}^{l+s} dt = 
\begin{cases}
l, & s \ge l, \\
s, & l \ge s.
\end{cases}
\]

Hence,
\[
\int_s^\infty \int_{t-s}^t g(l) \, dl \, dt = \int_0^s l g(l) dl + s \int_s^\infty g(l) dl.
\]

By properties of the log-normal distribution,

\[
m := \int_0^\infty l g(l) dl = \exp\left(\mu + \frac{\sigma^2}{2}\right) < \infty,
\]
so for all \(s > 0\),
\[
\int_0^s l g(l) dl \le m.
\]

Moreover, the tail
\[
\bar{G}(s) := \int_s^\infty g(l) dl
\]
decays subexponentially, implying
\[
\lim_{s \to \infty} s \bar{G}(s) = 0,
\]
and \(s \bar{G}(s)\) is bounded on \([0, \infty)\).

Thus,
\[\begin{array}{lcl}
\displaystyle\int_0^\infty I'(s) \left( \int_0^s l g(l) dl + s \int_s^\infty g(l) dl \right) ds
&\le&\displaystyle m \int_0^\infty I'(s) ds + \| s \bar{G}(s) \|_{L^\infty} \int_0^\infty I'(s) ds\\  
&=&\displaystyle m (I_\infty - I_0) + \| s \bar{G}(s) \|_{L^\infty}(I_\infty - I_0)< \infty.
\end{array}
\]
    \end{enumerate}
\end{proof}
This proposition concludes the proof of Theorem \ref{thm:exist_IDE}.
\section{Analysis of the Spatio-Temporal Model}\label{sec:analysisSpace}
In this section, we define the weak solution of System \eqref{syst2:PDE} and prove its local existence. The proof is based on the Faedo-Galerkin method followed by a compactness argument using Aubin-Lions' Lemma.\\
Let \(\Omega \subset \mathbb{R}^d\), \(d=2,3\), be a bounded Lipschitz domain. We define the function spaces:
\[
V := H^1(\Omega), \quad H := L^2(\Omega),
\]
with the standard inner products and norms.
%Recall the system 
%\begin{align*}
%\begin{cases}
%    I_t - \nabla^2 I = \beta S [I - \int_0^t g(t-s) I(s,x) ds], \ x \in \Omega \\ 
%    S_t - \nabla^2 S = -\beta S [I - \int_0^tg(t-s)I(x,s)ds], \ x \in \Omega \\ 
%    \frac {\partial{I}} {\partial{n}} = \frac {\partial{S}} {\partial{n}} = 0, x \in \partial{\Omega} \\
%    S(0,x)=S_0(x), \ I(0,x)=I_0(x)
%\end{cases}
%\end{align*}
Adding the first and the second equations of System \eqref{syst2:PDE}, we get
\begin{align*}
(I+S)_t - \nabla^2 (I+S) = 0.
\end{align*}
So, we let $U(t,x)=(I+S)(t,x)$, then by standard theory, if $U_0\in L^\infty(\Omega)$ there exists a unique weak solution $U\in L^\infty((0,T)\times\Omega)\cap C([0,T];L^2(\Omega))\cap L^2(0,T;H^1(\Omega))$ with $\partial_tU\in L^2(0,T;H^1(\Omega)')$, such that for all $\varphi\in H^1(\Omega)$,
$$\displaystyle\langle U_t,\varphi\rangle_{V',V}+\int_\Omega\nabla U\cdot\nabla \varphi=0,\text{ for almost every } t\in (0,T),$$
with $U(0,x)=U_0(x)$.  Moreover, since the Neumann boundary condition prevents flux through the boundary, the solution $U$ verifies the mass conservation:
$$\displaystyle\int_\Omega U(t,x)\;dx=\int_\Omega U_0(x)\;dx.$$
In addition, as a consequence of the parabolic maximum principle with homogeneous Neumann boundary condition, the solution $U(t,x)$ is bounded by the essential bounds of the initial datum $U_0(x)$. In this case, since $0<U_0<M$, then $0<U(t,x)<M$ for all $(t,x)\in(0,T)\times \Omega$.
%begin{align*}
%\frac {\partial{}} {\partial{t}} \int_\Omega (I+S)dx=0 \implies \int_\Omega (I+S)dx=\int_\Omega (S_0 + I_0)dx
%\end{align*}
%If we want to solve for $I+S$, we get that 
%\begin{align*}
%\begin{cases}
%U = I+S \\ 
%U_t - \nabla^2 U = 0, \ \Omega \\
%\frac {\partial{U}} {\partial{n}} = 0, \ \partial{\Omega} \\
%U(0,x)=S_0(x)+I_0(x) \\
%\int_\Omega U(x,t)dx = \int_\Omega U_0(x)dx, \ \forall t
%\end{cases}
%\end{align*}
%which we can solve similarly to the heat equation. \\
We also rewrite the reaction-diffusion equation involving $I$ by replacing $S=U-I$: \\
\begin{equation}\label{PDE:I:Full}
    I_t - \nabla^2 I = \beta^\star (U-I) \left[I - \int_0^t g(t-s) I(s,x)\,ds\right], \ x \in \Omega.
\end{equation}
Once we show, the existence of solution for this last equation, we obtain the distribution of the susceptible population $S= U-I$.\\
We will use Galerkin approximation, and detailed a priori estimates to prove the existence of a weak solution as defined below.
\subsection*{Weak Formulation}

We consider the PDE:
\begin{equation}\label{eq:mainPDE}
\begin{cases}
\displaystyle I_t - \nabla^2 I = \beta (U - I) \left[ I - \int_0^t g(t-s) I(s,x) \, ds \right], & (t,x) \in (0,T) \times \Omega, \\
I(0,x) = I_0(x) > 0, & x \in \Omega, \\
\partial_n I = 0, & \text{on } (0,T) \times \partial \Omega,
\end{cases}
\end{equation}
where \(\beta >0\), \(U \in L^\infty((0,T)\times\Omega)\), namely \(0<U(t,x)<M\), and \(g\) is the log-normal distribution kernel on \([0,T]\).

\begin{definition}[Weak solution]\label{def:weak}
A function \(I \in L^2(0,T;V)\) with weak time derivative \(I_t \in L^2(0,T;V')\) is a weak solution if
\[
\langle I_t, \phi \rangle_{V',V} + \int_\Omega \nabla I \cdot \nabla \phi \, dx = \beta^\star \int_\Omega (U - I) \left( I - \int_0^t g(t-s) I(s,x)\,ds \right) \phi \, dx
\]
for all \(\phi \in V\) and a.e. \(t \in (0,T)\), with initial condition \(I(0)=I_0\).
\end{definition}
\begin{theorem}\label{thm:Existence of Weak solution}
    Let $I_0\in L^2(\Omega)$. There exists locally in time a weak solution of System \eqref{eq:mainPDE} in the sense of Definition \ref{def:weak}.
\end{theorem}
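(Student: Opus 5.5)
We will use the Faedo--Galerkin method announced at the start of Section~\ref{sec:analysisSpace}. Let $\{w_k\}_{k\ge1}$ be the eigenfunctions of the Neumann Laplacian on $\Omega$, orthonormal in $H$ and orthogonal in $V$, and let $V_n=\mathrm{span}\{w_1,\dots,w_n\}$. We look for $I_n(t)=\sum_{k=1}^n c_{k}^n(t)w_k$ satisfying, for $k=1,\dots,n$,
\[
\frac{d}{dt}(I_n,w_k)+\int_\Omega\nabla I_n\cdot\nabla w_k\,dx=\beta^\star\int_\Omega (U-I_n)\Bigl(I_n-\int_0^t g(t-s)I_n(s)\,ds\Bigr)w_k\,dx,
\]
with $I_n(0)=P_nI_0$, the $H$-orthogonal projection. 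This is a finite-dimensional system of Volterra integro-differential equations for $c^n(t)\in\mathbb{R}^n$. The right-hand side is locally Lipschitz in $c^n$, because $U\in L^\infty$ and $w_k\in L^4$ for $d\le3$. The memory kernel satisfies $\int_0^t g\le1$. A Picard fixed point argument in $C([0,T_n];\mathbb{R}^n)$, in the spirit of Theorem~\ref{Burton}, therefore gives a unique local solution on $[0,T_n]$.

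\textbf{A priori estimates.} We test with $I_n$ and write $J_n(t)=\int_0^t g(t-s)I_n(s)\,ds$. By Young's convolution inequality, $\|J_n\|_{L^2(0,t;H)}\le\|I_n\|_{L^2(0,t;H)}$, and also $\|J_n(t)\|_H\le\sup_{s\le t}\|I_n(s)\|_H$. The linear part $\beta^\star U(I_n-J_n)I_n$ is bounded by $C M(\|I_n\|_H^2+\sup_{s\le t}\|I_n(s)\|_H^2)$. The cubic terms are more delicate. The term $-\beta^\star I_n^3$ is not sign-definite, since $I_n$ need not be nonnegative at the Galerkin level. The term $\beta^\star I_nJ_nI_n$ has no good sign either. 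We bound both via Gagliardo--Nirenberg. For $d\le3$,
\[
\|I_n\|_{L^3}^3\le C\|I_n\|_H^{3/2}\|I_n\|_V^{3/2}\le \tfrac14\|\nabla I_n\|_H^2+C(\|I_n\|_H^2+\|I_n\|_H^6),
\]
and similarly $\int |I_n|^2|J_n|\le \|I_n\|_{L^3}^2\|J_n\|_{L^3}$, which we handle together with $\|J_n\|_{L^3}\le\int_0^t g(t-s)\|I_n(s)\|_{L^3}ds$. Setting $y(t)=\sup_{s\le t}\|I_n(s)\|_H^2$, we expect to reach a differential inequality of the form $y'\le C(y+y^3)$ after absorbing gradient terms. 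This inequality yields a time $T^*>0$, depending only on $\|I_0\|_H$, $M$ and $\beta^\star$, on which the following bounds are uniform in $n$:
\[
I_n \text{ bounded in } L^\infty(0,T^*;H)\cap L^2(0,T^*;V).
\]
This uniform time is why the result is only local. In particular $T_n\ge T^*$.

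\textbf{Time derivative bound and passage to the limit.} By duality with $P_n\phi$ (the projection is $V$-stable for this basis), we bound $\|\partial_tI_n\|_{V'}\le\|\nabla I_n\|_H+C\|(U-I_n)(I_n-J_n)\|_{L^{6/5}}$. The nonlinear part is controlled by $\|I_n\|_{L^{12/5}}^2$, or in $L^1$ in time with values in $V'$; interpolation shows it lies in $L^{4/3}(0,T^*;V')$ for $d=3$. To match Definition~\ref{def:weak} one would like $L^2(0,T;V')$. I would aim for this by estimating in a slightly larger dual space, or by adding an $H$-bound on $\nabla I_n$ via testing with $-\Delta I_n$ when $I_0\in V$. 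With $I_0\in L^2$ only, I would accept a time-derivative bound in $L^{4/3}(0,T^*;(H^1)')$, which suffices for compactness. Aubin--Lions then gives a subsequence with $I_n\to I$ strongly in $L^2(0,T^*;H)$, weakly in $L^2(0,T^*;V)$ and weak-$*$ in $L^\infty(0,T^*;H)$. Strong $L^2$ convergence gives $J_n\to J$ strongly as well, by the Young bound. The products $(U-I_n)(I_n-J_n)$ converge in $L^1$, and weakly in a suitable $L^p$, so we can pass to the limit in the Galerkin equation tested against $w_k\,\psi(t)$ with $\psi\in C_c^\infty(0,T^*)$. Density of $\cup V_n$ in $V$ then gives the weak formulation. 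The initial condition follows from $I\in C([0,T^*];V')$ and $P_nI_0\to I_0$ in $H$.

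\textbf{Main obstacle.} The hardest step will be the energy estimate, because of the quadratic reaction, which is cubic after testing, combined with the lack of sign and maximum-principle information at the Galerkin level. I would first try the Gagliardo--Nirenberg absorption above to obtain the uniform local time $T^*$. A secondary difficulty is the regularity of $\partial_t I$ needed in Definition~\ref{def:weak} for $d=3$. If necessary, I would obtain it by first proving the result for $I_0\in V$, where the additional $V$-estimate gives $L^2(0,T;V')$ directly.
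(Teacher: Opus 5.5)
Your overall route matches the paper's: a Neumann-eigenfunction Galerkin scheme, a Burton/Picard fixed point for the finite-dimensional Volterra system, an energy estimate that absorbs the sign-indefinite cubic terms by Gagliardo--Nirenberg, a sup-in-time functional closed by a Bihari-type argument to get a uniform local time $T^*$, and Aubin--Lions. One small difference is the memory term. You bound it by $\|I_n\|_{L^3}^2\|J_n\|_{L^3}$, and $\|J_n\|_{L^3}$ involves past gradients, so this only closes after integrating in time and using Young's convolution inequality. What you get for $y$ is therefore an integral inequality, not $y'\le C(y+y^3)$. The paper's bound $\|M_N\|_{L^2}\|I_N\|_{L^4}^2$, combined with $\|M_N(t)\|_{L^2}\le\sqrt{Y(t)}$, is pointwise in time and simpler.

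The genuine gap is the time-derivative step. You conclude that for $I_0\in L^2$ the reaction term is only in $L^{4/3}(0,T^*;V')$, and you propose either to accept that or to assume $I_0\in V$. Neither proves the statement. Definition~\ref{def:weak} requires $I_t\in L^2(0,T;V')$, and approximating from $V$-data would need the same missing bound, uniformly in $\|I_0\|_H$. The obstacle is not real, however. The exponent $4/3$ comes from bounding $I_n^2$ in $L^2(\Omega)$ via $\|I_n\|_{L^4}^2\le C\|I_n\|_H^{1/2}\|I_n\|_V^{3/2}$, as for a Navier--Stokes convective term. Here the nonlinearity carries no derivative, and the $L^{6/5}$ duality you already wrote down is sharper. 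For $d\le 3$,
\[
\|I_n^2\|_{L^{6/5}}=\|I_n\|_{L^{12/5}}^2\le \|I_n\|_{L^2}^{3/2}\|I_n\|_{L^6}^{1/2}\le C\,\sup_{s\le T^*}\|I_n(s)\|_H^{3/2}\,\|I_n\|_V^{1/2},
\]
which is bounded in $L^4(0,T^*)$ because $I_n$ is bounded in $L^2(0,T^*;V)$. The term $I_nJ_n$ is handled the same way, using $\|J_n(t)\|_{L^{12/5}}\le\int_0^t g(t-s)\|I_n(s)\|_{L^{12/5}}\,ds$ and Young's inequality in time. The terms $UI_n$ and $UJ_n$ are bounded in $L^\infty(0,T^*;H)$. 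Hence $\partial_tI_n$ is bounded in $L^2(0,T^*;V')$ uniformly in $n$; the only term that is merely square-integrable in time is $\Delta I_n$. This bound passes to the limit and gives exactly the regularity in Definition~\ref{def:weak} with $I_0\in L^2$.

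With this correction, the rest of your argument goes through: Aubin--Lions, strong convergence of $J_n$ via the Young bound, passage to the limit against $w_k\psi$, and recovery of $I(0)=I_0$. Note that the paper itself only asserts $I_t\in L^2(0,T^*;V')$ when invoking Lions' compactness theorem. It also claims a sup-in-time bound on $\|\nabla I_N\|_{L^2}$ that the energy inequality does not give; your $L^2(0,T^*;V)$ bound is the correct one. So this computation is what is missing there as well.
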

The proof of the theorem consists of using the Faedo-Galerkin approach with a priori estimates and a compactness argument, as detailed in the following paragraphs.
\subsection{Galerkin Approximation}

Let \(\{w_k\}_{k=1}^\infty\) be a basis consisting of eigenfunctions of \(-\Delta\) with Neumann BC, orthonormal in \(H\) and orthogonal in \(V\).

Define \(V_N = \mathrm{span}\{w_1, \ldots, w_N\}\), and approximate solutions
\[
I_N(t,x) = \sum_{k=1}^N a_k^N(t) w_k(x).
\]

The Galerkin system: for \(k=1,\ldots,N\),
\begin{equation}\label{eq:Galerkin}
\frac{d}{dt} (I_N, w_k) + (\nabla I_N, \nabla w_k) = \beta^\star \left( (U - I_N)(I_N - M_N), w_k \right),
\end{equation}
with initial conditions
\[
(I_N(0), w_k) = (I_0, w_k),
\]
where
\[
M_N(t,x) := \int_0^t g(t-s) I_N(s,x) \, ds.
\]
and \((\cdot,\cdot)\) denotes the $L^2-$inner product.\\
Then System \eqref{eq:Galerkin} can be written as
\begin{equation}\label{eq:Galerk_syst}
\dot{a}^N(t) + K a^N(t) = F(a^N)(t),
\end{equation}
where \(F\) is locally Lipschitz.

\begin{theorem}[Existence of Galerkin solutions]
For each \(N\), there exists a unique solution \(I_N \in C^1([0,T]; V_N)\) to \eqref{eq:Galerkin}.
\end{theorem}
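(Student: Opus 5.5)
The plan is to treat \eqref{eq:Galerk_syst} as a finite-dimensional Volterra integro-differential system for the coefficient vector $a^N(t)=(a^N_1,\dots,a^N_N)$. The first step is local existence and uniqueness by a contraction argument, as in the proof of Theorem \ref{thm:exist_IDE}. The second step is an a priori bound, valid on the whole of $[0,T]$, that rules out blow-up of $a^N$.

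\textbf{Step 1: the system in coefficient form.} Since the $w_k$ are orthonormal in $H$ and are Neumann eigenfunctions, we have $(I_N,w_k)=a_k^N$ and $(\nabla I_N,\nabla w_k)=\lambda_k a_k^N$. Hence $K=\mathrm{diag}(\lambda_1,\dots,\lambda_N)$, and
\[
F_k(a^N)(t)=\beta^\star\Big((U(t)-I_N(t))\big(I_N(t)-M_N(t)\big),w_k\Big),\qquad M_N(t)=\sum_{j=1}^N\Big(\int_0^t g(t-s)a_j^N(s)\,ds\Big)w_j .
\]
So $F$ depends on the present value $a^N(t)$ and on the memory $\int_0^t g(t-s)a^N(s)\,ds$, and it is quadratic in these quantities.

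In finite dimensions all norms are equivalent. To control the products I would use H\"older and the finite-dimensional bound $\|w\|_{L^\infty}\le C_N\|w\|_{L^2}$ for $w\in V_N$; alternatively, $w_j\in L^4$ by Sobolev embedding for $d\le3$ suffices. Together with $0<U<M$ and $\int_0^t g\le1$, this shows that $F$ is Lipschitz on balls: for $\sup_{[0,\tau]}|a|,\sup_{[0,\tau]}|b|\le R$,
\[
|F(a)(t)-F(b)(t)|\le L_{R,N}\sup_{[0,t]}|a-b|.
\]
The constant $L_{R,N}$ depends on $R$, $N$, $M$ and $\beta^\star$. Measurability of $U$ in $t$ makes $F(a)$ continuous in $t$ after integration.

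\textbf{Step 2: local existence and uniqueness.} The problem is equivalent to the fixed point of
\[
(\Phi a)(t)=e^{-tK}a^N(0)+\int_0^t e^{-(t-s)K}F(a)(s)\,ds,\qquad a^N_k(0)=(I_0,w_k).
\]
On the closed ball of radius $R=2|a^N(0)|+1$ in $C([0,\tau];\mathbb{R}^N)$, the map $\Phi$ is a self-map and a contraction once $\tau$ is small, since the integral term is $O(\tau)$. One could instead invoke Theorem \ref{Burton} componentwise. This gives a unique continuous solution, and $C^1$ regularity follows from the equation. The local time depends only on a bound for $|a^N|$, so the solution continues as long as $a^N$ stays bounded.

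\textbf{Step 3: a priori bound (main obstacle).} This is where I expect the difficulty. The nonlinearity contains $-\beta^\star I_N^2$, which is favourable, but also $\beta^\star I_N M_N$, which has no sign. I would test \eqref{eq:Galerkin} with $I_N$ itself, multiplying by $a_k^N$ and summing, to obtain
\[
\tfrac12\tfrac{d}{dt}\|I_N\|_2^2+\|\nabla I_N\|_2^2=\beta^\star\int_\Omega (U-I_N)(I_N-M_N)I_N\,dx .
\]
The cubic terms are $-\beta^\star I_N^3+\beta^\star I_N^2M_N$. These are only controlled if $I_N\ge0$, and the Galerkin truncation destroys that sign.

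So I would settle for an $N$-dependent bound, which is all this theorem needs. Using $\|w\|_{L^\infty}\le C_N\|w\|_2$ on $V_N$ and $\|M_N(t)\|_2\le\sup_{[0,t]}\|I_N\|_2$, set $y(t)=\sup_{[0,t]}\|I_N\|_2^2$. This gives
\[
y'\lesssim C_N\big(y+y^{3/2}\big),
\]
which only yields a bound on some interval $[0,T_N]$.

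To reach all of $[0,T]$, I would try to exploit the structure of the equation: absorb the term $I_N^2M_N$ into $-I_N^3$ wherever $I_N>0$, and treat the region $I_N<0$ separately. If that fails, I would state the result on a possibly $N$-dependent interval $[0,T_N]$. I would then rely on the uniform estimates of the subsequent sections, which give a time $T$ independent of $N$, to extend the solution to $[0,T]$ by the continuation criterion of Step 2. This is consistent with the local-in-time nature of Theorem \ref{thm:Existence of Weak solution}.
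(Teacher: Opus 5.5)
Your proposal takes essentially the paper's route. You get local existence and uniqueness for the $N$-dimensional Volterra system from the local Lipschitz property of $F$, then continue the solution using the $N$-independent energy/Bihari bounds of Section~\ref{sec:apriori}. Your Picard contraction on the variation-of-constants formula fits the coupled vector system better than the scalar Burton theorem the paper cites, though $C^1$ regularity needs $U\in C([0,T];L^2(\Omega))$ rather than mere measurability. As in the paper, this gives the Galerkin solution only up to $T^*$, not on an arbitrary $[0,T]$. Strictly, it gives it on $[0,T']$ for each $T'<T^*$, since the Bihari bound degenerates at $T^*$, so the statement is really proved with $T$ replaced by $T^*$.
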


\begin{proof}[Sketch]
The IDE system in \(\mathbb{R}^N\) has a locally Lipschitz right side. The classical IDE theory (Theorem \ref{Burton}) ensures existence and uniqueness of solution of System \eqref{eq:Galerk_syst}. Hence, local existence and uniqueness for System \eqref{eq:Galerkin} follows. The a priori energy bounds (see Section~\ref{sec:apriori}) extend the solution to an interval $[0,T^*]$.
\end{proof}

\subsection{A Priori Estimates}
\label{sec:apriori}
\begin{lem}\label{lem:apriori}
    There exist positive constants $C_1$, $C_2$  and $T^*$ (independent of $N$) such that:
    \[\sup_{t\in[0,T^*]}\|I_N(t)\|_{L^2(\Omega)}^2\leq C_1\text{ and }  \sup_{t\in[0,T^*]}\|\nabla I_N(t)\|_{L^2(\Omega)}^2\leq C_2.\]
\end{lem}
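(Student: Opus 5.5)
We test the Galerkin system \eqref{eq:Galerkin} against the solution itself and then against $-\Delta I_N$, and close both estimates with a nonlinear Gronwall (comparison ODE) argument on a short interval. This is why $T^*$ appears in the statement.

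\emph{$L^2$ estimate.} Multiply \eqref{eq:Galerkin} by $a_k^N(t)$ and sum over $k$. This gives
\[
\tfrac12\tfrac{d}{dt}\|I_N\|_{L^2}^2+\|\nabla I_N\|_{L^2}^2=\beta^\star\int_\Omega (U-I_N)(I_N-M_N)I_N\,dx .
\]
The memory term is handled by Young's convolution inequality in time together with $\int_0^t g\le 1$:
\[
\|M_N(t)\|_{L^2}\le \int_0^t g(t-s)\|I_N(s)\|_{L^2}ds\le \sup_{s\le t}\|I_N(s)\|_{L^2}.
\]
The terms with $U$ are bounded by $M\|I_N\|_{L^2}(\|I_N\|_{L^2}+\|M_N\|_{L^2})$. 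The cubic terms $-\int I_N^3$ and $\int I_N^2M_N$ are the difficulty, since $I_N$ has no sign a priori at the Galerkin level. I would bound them with Hölder and the Gagliardo--Nirenberg inequality, valid for $d\le 3$:
\[
\|I_N\|_{L^3}^3\le C\|I_N\|_{L^2}^{3-d/2}\|I_N\|_{H^1}^{d/2}.
\]
I would also bound the $M_N$ cubic term by $\|I_N\|_{L^3}^2\|M_N\|_{L^3}$, estimating $\|M_N\|_{L^3}$ by the same convolution argument. Absorbing the gradient part via Young then yields a differential inequality for $y(t)=\sup_{s\le t}\|I_N(s)\|^2_{L^2}$ of the form
\[
y'\le C(y+y^{p}),\qquad p>1 .
\]

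\emph{Gradient estimate.} Multiply by $\lambda_k a_k^N$; this corresponds to testing with $-\Delta I_N\in V_N$, which is allowed because the $w_k$ are Neumann eigenfunctions. This gives
\[
\tfrac12\tfrac{d}{dt}\|\nabla I_N\|^2+\|\Delta I_N\|^2\le \beta^\star\|(U-I_N)(I_N-M_N)\|_{L^2}\|\Delta I_N\|_{L^2}.
\]
The right side involves $\|I_N\|_{L^4}^2$ and $\|I_N\|_{L^4}\|M_N\|_{L^4}$, which the Sobolev embedding $H^1\subset L^4$ ($d\le3$) controls by $\|I_N\|_{H^1}^2$. After Young's inequality, absorbing $\tfrac12\|\Delta I_N\|^2$, and again using the convolution bound for $M_N$ in $H^1$, we obtain for $z(t)=\sup_{s\le t}\|I_N(s)\|_{H^1}^2$
\[
z'\le C(1+z+z^2).
\]

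\emph{Closing the argument.} The initial data satisfy $\|I_N(0)\|\le\|I_0\|$ by orthogonal projection. For the $H^1$ bound one also needs $\|\nabla I_N(0)\|\le\|\nabla I_0\|$, which requires $I_0\in H^1$; I would add this assumption, or else only claim the gradient bound in $L^2(0,T^*)$ from the dissipation term. Comparison with the ODE $\dot Y=C(1+Y+Y^2)$ gives a blow-up time $T^*>0$ depending only on $C$, $M$, $\beta^\star$, $\|I_0\|_{H^1}$, and not on $N$. Choosing $T^*$ at half that time gives $C_1$ and $C_2$.

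\emph{Main obstacle.} I expect the main obstacle to be the superlinear (cubic) nonlinearity combined with the nonlocal memory term. Uniformity in $N$ only survives on a short time interval, and passing the supremum through the convolution must be done carefully so that Gronwall applies to a monotone quantity.
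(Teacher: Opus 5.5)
Your $L^2$ step follows the paper's route: test with $I_N$, use $\|M_N(t)\|_{L^2}\le\sup_{s\le t}\|I_N(s)\|_{L^2}$, absorb the gradient via Sobolev interpolation and Young, and close with a Bihari-type comparison for $Y(t)=\sup_{s\le t}\|I_N(s)\|_{L^2}^2$.

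\textbf{Gradient bound.} Here you genuinely depart from the paper. The paper performs no second estimate. It asserts that the $L^2$ bound ``combined with the energy inequality'' gives $\|\nabla I_N(t)\|_{L^2}^2\le C$ for every $t\in[0,T^*]$. That does not follow: integrating the energy inequality only yields $\int_0^{T^*}\|\nabla I_N\|_{L^2}^2\,dt\le C$, and nothing in it controls $\|\nabla I_N(t)\|_{L^2}$ at a fixed time.

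Your test with $-\Delta I_N=\sum_{k\le N}\lambda_k a_k^N w_k\in V_N$ is what actually produces an $L^\infty(0,T^*;H^1)$ bound. It closes cleanly because the memory term enters only through $H^1$ norms of $M_N$, and $\|M_N(t)\|_{H^1}\le\sup_{s\le t}\|I_N(s)\|_{H^1}=\sqrt{z(t)}$.

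Your caveat about the data is correct and necessary. $I_N(0)$ is the $L^2$ projection of $I_0$, so $\|\nabla I_N(0)\|_{L^2}^2=\sum_{k\le N}\lambda_k (I_0,w_k)^2$, which is unbounded in $N$ whenever $I_0\in L^2(\Omega)\setminus H^1(\Omega)$. Hence the second inequality of the lemma cannot hold under the existence theorem's hypothesis $I_0\in L^2(\Omega)$.

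\textbf{What each route buys.} Your route gives a correct proof of the lemma as stated, at the price of assuming $I_0\in H^1(\Omega)$. The paper's route, read correctly, gives only $L^\infty(0,T^*;L^2)\cap L^2(0,T^*;H^1)$ bounds, but for $L^2$ data, and that is all the compactness step needs from this lemma.

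Under $I_0\in H^1(\Omega)$ your $H^1$ estimate is self-sufficient. Indeed $z$ dominates $\|I_N\|_{L^2}^2$, and $\frac{d}{dt}\|I_N\|_{L^2}^2\le 2\beta^\star\|(U-I_N)(I_N-M_N)\|_{L^2}\|I_N\|_{L^2}\le C(z+z^2)$. So the Gagliardo--Nirenberg step is needed only for the fallback version. Taking $T^*$ strictly below the blow-up time of the comparison ODE is also more careful than the paper, whose $T^*$ equals that blow-up time.

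\textbf{A step that does not close as written.} In the fallback ($L^2$-data) version, one step of your $L^2$ estimate fails. You bound $\int_\Omega I_N^2M_N$ by $\|I_N\|_{L^3}^2\|M_N\|_{L^3}$ and then use $\|M_N(t)\|_{L^3}\le\sup_{s\le t}\|I_N(s)\|_{L^3}$. This brings in $\|\nabla I_N(s)\|_{L^2}$ at earlier times $s<t$, which the dissipation $\|\nabla I_N(t)\|_{L^2}^2$ at the current time cannot absorb. So no inequality $y'\le C(y+y^p)$ for $y(t)=\sup_{s\le t}\|I_N(s)\|_{L^2}^2$ follows. There are two fixes:
\begin{enumerate}
\item Put the memory factor in $L^2$, as the paper does: $\int_\Omega I_N^2M_N\le\|I_N\|_{L^4}^2\|M_N\|_{L^2}$ with $\|M_N\|_{L^2}\le\sqrt{y}$.
\item Keep your splitting but work with the time-integrated identity. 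H\"older in time and Young's convolution inequality with $\|g\|_{L^1}\le 1$ give $\int_0^t\|I_N\|_{L^3}^2\|M_N\|_{L^3}\,d\tau\le\int_0^t\|I_N\|_{L^3}^3\,d\tau$. This is then absorbed into $\int_0^t\|\nabla I_N\|_{L^2}^2\,d\tau$ exactly like the pure cubic term.
\end{enumerate}
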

To prove Lemma \ref{lem:apriori}, we first multiply \eqref{eq:Galerkin} by \(a_k(t)\) and sum to get the energy identity:
\[
\frac{1}{2} \frac{d}{dt} \|I_N\|_{L^2}^2 + \|\nabla I_N\|_{L^2}^2 = \beta \int_\Omega (U - I_N)(I_N - M_N) I_N \, dx.
\]

Rewrite the right side of this last equation as:
\[
\int_\Omega (U - I_N)(I_N - M_N) I_N \, dx = \int_\Omega U I_N^2 \, dx - \int_\Omega I_N^3 \, dx - \int_\Omega UI_N M_N \, dx + \int_\Omega I_N^2 M_N \, dx,
\]

Since \(U\) is bounded, we estimate term-by-term.

\subsubsection{Estimate of the cubic term}

\begin{lemma}[Estimate of the cubic term]\label{lemma:cubic-term}
For any \(\varepsilon > 0\), there exists \(C_\varepsilon, C > 0\) such that
\[
\int_\Omega I_N^3 \, dx \leq \varepsilon \|\nabla I_N\|_{L^2}^2 + C_\varepsilon \|I_N\|_{L^2}^6 + C \|I_N\|_{L^2}^3.
\]
\end{lemma}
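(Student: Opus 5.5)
Our plan is to bound $\int_\Omega |I_N|^3\,dx$ by the $L^3$ norm and then use a Gagliardo--Nirenberg interpolation between $L^2$ and $H^1$, followed by Young's inequality.

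We use $\int_\Omega I_N^3\,dx \le \|I_N\|_{L^3}^3$. On a bounded Lipschitz domain $\Omega\subset\mathbb{R}^d$, $d\le 3$, the Gagliardo--Nirenberg inequality for the full $H^1$ norm gives
\[
\|v\|_{L^3} \le C_{GN}\,\|v\|_{L^2}^{1-\theta}\,\|v\|_{H^1}^{\theta},\qquad \theta=\tfrac{d}{6}.
\]
The $H^1$ norm is needed because Neumann eigenfunctions include constants. The worst case is $d=3$, where $\theta=\tfrac12$. For $d=2$, $\theta=\tfrac13$, which is strictly better, and the same argument works with different exponents. We therefore treat $d=3$ and note that $d=2$ follows identically or via the embedding.

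With $\theta=\tfrac12$ we obtain
\[
\|I_N\|_{L^3}^3 \le C\,\|I_N\|_{L^2}^{3/2}\bigl(\|\nabla I_N\|_{L^2}+\|I_N\|_{L^2}\bigr)^{3/2}.
\]
Using $(a+b)^{3/2}\le \sqrt{2}\,(a^{3/2}+b^{3/2})$, this splits into two terms:
\[
C\|I_N\|_{L^2}^{3/2}\|\nabla I_N\|_{L^2}^{3/2} \quad\text{and}\quad C\|I_N\|_{L^2}^{3}.
\]
The second term is exactly the $C\|I_N\|_{L^2}^3$ in the statement.

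For the first term we apply Young's inequality with exponents $p=\tfrac43$ on the gradient factor and $q=4$ on the $L^2$ factor, inserting $\varepsilon$:
\[
C\|I_N\|_{L^2}^{3/2}\|\nabla I_N\|_{L^2}^{3/2} \le \varepsilon\|\nabla I_N\|_{L^2}^2 + C_\varepsilon\|I_N\|_{L^2}^{6}.
\]
Here $(\|\nabla I_N\|^{3/2})^{4/3}=\|\nabla I_N\|^2$ and $(\|I_N\|^{3/2})^4=\|I_N\|^6$, with $C_\varepsilon\sim \varepsilon^{-3}$. Summing the two pieces gives the claim.

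The main obstacle is the precise form of the interpolation inequality with the lower-order term on a Lipschitz domain under Neumann conditions. We would justify it by extending $I_N$ with a Stein extension operator to $\mathbb{R}^3$ and applying the whole-space Gagliardo--Nirenberg (Ladyzhenskaya-type) inequality $\|v\|_{L^3}\le C\|v\|_{L^2}^{1/2}\|\nabla v\|_{L^2}^{1/2}$ in $\mathbb{R}^3$. The extension's norm bounds in $L^2$ and $H^1$ produce the $\|I_N\|_{L^2}$ correction.

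In $d=2$, exponent counting gives $\|I_N\|_{L^2}^{2}\|I_N\|_{H^1}$, which Young's inequality bounds by $\varepsilon\|\nabla I_N\|^2 + C_\varepsilon\|I_N\|^4 + C\|I_N\|^3$. Since $\|I_N\|^4\le \|I_N\|^6+\|I_N\|^3$, up to constants this fits the stated form.

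Finally, the constants depend only on $\Omega$ and $\varepsilon$, not on $N$, as required for Lemma \ref{lem:apriori}.
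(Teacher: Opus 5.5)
Your proof is correct and follows essentially the paper's route. The paper reaches the same bound $\|I_N\|_{L^2}^{3/2}\bigl(\|\nabla I_N\|_{L^2}+\|I_N\|_{L^2}\bigr)^{3/2}$ through the Hölder interpolation $\|I_N\|_{L^3}\le\|I_N\|_{L^2}^{1/2}\|I_N\|_{L^6}^{1/2}$ together with the embedding $H^1\hookrightarrow L^6$, which you package as Gagliardo--Nirenberg via a Stein extension, and then splits the terms and applies Young with exponents $\tfrac43$ and $4$ exactly as you do. Two small differences: writing $\int_\Omega I_N^3\,dx\le\|I_N\|_{L^3}^3$ instead of the paper's equality is slightly more careful, since $I_N$ need not be nonnegative, and your separate $d=2$ discussion is unnecessary because $H^1\hookrightarrow L^6$ also holds in two dimensions, so the $d=3$ argument applies verbatim.
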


\begin{proof}
By Sobolev embedding \(H^1 \hookrightarrow L^6\), and interpolation,
\[
\|I_N\|_{L^3} \leq \|I_N\|_{L^2}^{1/2} \|I_N\|_{L^6}^{1/2},
\]
so
\[
\int_\Omega I_N^3 dx = \|I_N\|_{L^3}^3 \leq \|I_N\|_{L^2}^{3/2} \|I_N\|_{L^6}^{3/2}.
\]

Using the embedding,
\[
\|I_N\|_{L^6}^{3/2} \leq C \left(\|\nabla I_N\|_{L^2}^{3/2} + \|I_N\|_{L^2}^{3/2}\right).
\]

Hence
\[
\int_\Omega I_N^3 dx \leq C \|I_N\|_{L^2}^{3/2} \|\nabla I_N\|_{L^2}^{3/2} + C \|I_N\|_{L^2}^3.
\]

Applying Young's inequality with \(p=\frac{4}{3}, q=4\),
\[
\|I_N\|_{L^2}^{3/2} \|\nabla I_N\|_{L^2}^{3/2} \leq \varepsilon \|\nabla I_N\|_{L^2}^2 + C_\varepsilon \|I_N\|_{L^2}^6,
\]
which yields the claim.
\end{proof}

\subsubsection{Estimate of the mixed term}

\begin{lemma}[Estimate of the mixed cubic term]
\label{lemma:mixed-term}
Let \(I_N, M_N \in L^2(\Omega)\) with \(I_N \in H^1(\Omega)\). For any \(\varepsilon\), there exist constants \(C_{\varepsilon}, C > 0\) such that
\[
\int_\Omega I_N^2 M_N \, dx \leq \varepsilon \|\nabla I_N\|_{L^2}^2 + C_\varepsilon \|M_N\|_{L^2}^4 \|I_N\|_{L^2}^2 + C\|M_N\|_{L^2} \|I_N\|_{L^2}^2.
\]
\end{lemma}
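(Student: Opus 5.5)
The plan is to mirror the proof of Lemma \ref{lemma:cubic-term}: Hölder's inequality, then Gagliardo--Nirenberg type interpolation with the Sobolev embedding $H^1(\Omega)\hookrightarrow L^6(\Omega)$ (valid for $d=2,3$ on a bounded Lipschitz domain), and finally Young's inequality. The gradient appears with the small weight $\varepsilon$ and the remaining factors are absorbed into $L^2$ norms.

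\textbf{Step 1: Hölder.} Apply Hölder with exponents $(2,2)$:
\[
\int_\Omega I_N^2 M_N\,dx \le \|M_N\|_{L^2}\,\|I_N^2\|_{L^2} = \|M_N\|_{L^2}\,\|I_N\|_{L^4}^2 .
\]
Only $\|M_N\|_{L^2}$ is needed, which is available since $M_N(t)\in L^2(\Omega)$ (indeed $\|M_N(t)\|_{L^2}\le \sup_{s\le t}\|I_N(s)\|_{L^2}$ by Minkowski's integral inequality and $\int g\le 1$). That fact is not used here, but it will matter when the lemma is fed into the energy inequality.

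\textbf{Step 2: interpolation.} Interpolate $L^4$ between $L^2$ and $L^6$. Since $\tfrac14=\tfrac{\theta}{2}+\tfrac{1-\theta}{6}$ gives $\theta=\tfrac14$, we get $\|I_N\|_{L^4}\le \|I_N\|_{L^2}^{1/4}\|I_N\|_{L^6}^{3/4}$, and hence
\[
\|I_N\|_{L^4}^2\le \|I_N\|_{L^2}^{1/2}\|I_N\|_{L^6}^{3/2}\le C\|I_N\|_{L^2}^{1/2}\bigl(\|\nabla I_N\|_{L^2}^{3/2}+\|I_N\|_{L^2}^{3/2}\bigr).
\]
Therefore
\[
\int_\Omega I_N^2M_N\,dx\le C\|M_N\|_{L^2}\|I_N\|_{L^2}^{1/2}\|\nabla I_N\|_{L^2}^{3/2}+C\|M_N\|_{L^2}\|I_N\|_{L^2}^{2}.
\]
The second term is already the last term of the claimed bound.

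\textbf{Step 3: Young.} Apply Young's inequality with $p=\tfrac43$, $q=4$ to the first term:
\[
C\|\nabla I_N\|_{L^2}^{3/2}\cdot\bigl(\|M_N\|_{L^2}\|I_N\|_{L^2}^{1/2}\bigr)\le \varepsilon\|\nabla I_N\|_{L^2}^2+C_\varepsilon\|M_N\|_{L^2}^4\|I_N\|_{L^2}^2 .
\]
This yields exactly the stated inequality.

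The main obstacle is the interpolation exponent bookkeeping. The Gagliardo--Nirenberg step must produce the power $3/2$ on the gradient, the only power that makes Young produce both $\|\nabla I_N\|^2$ and the advertised $\|M_N\|^4\|I_N\|^2$. One must also check that the full $H^1$ norm, not just the gradient, is what the Sobolev embedding controls on a bounded domain with Neumann conditions; this is the source of the additive lower-order term $C\|M_N\|_{L^2}\|I_N\|_{L^2}^2$. In $d=2$ the embedding into $L^6$ still holds, so the same exponents work, although a sharper Ladyzhenskaya-type bound would also be available there.
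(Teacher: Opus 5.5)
Your proposal is correct and follows essentially the same route as the paper: Hölder to reduce to $\|M_N\|_{L^2}\|I_N\|_{L^4}^2$, the interpolation $\|I_N\|_{L^4}\le \|I_N\|_{L^2}^{1/4}\|I_N\|_{L^6}^{3/4}$ combined with the embedding $H^1\hookrightarrow L^6$, and Young's inequality with exponents $\tfrac43$ and $4$ on the gradient term. Your bookkeeping also checks out: $\theta=\tfrac14$, and the lower-order term $C\|M_N\|_{L^2}\|I_N\|_{L^2}^2$ comes from the $L^2$ part of the $H^1$ norm.
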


\begin{proof}
By Hölder,
\[
\int_\Omega I_N^2 M_N dx \leq \|M_N\|_{L^2} \|I_N\|_{L^4}^2.
\]

By interpolation, we get
\[
\|I_N\|_{L^4} \leq C \|I_N\|_{L^2}^{1/4} \|I_N\|_{L^6}^{3/4}.
\]

Sobolev embedding yields
\[
\|I_N\|_{L^6} \leq C(\|\nabla I_N\|_{L^2} + \|I_N\|_{L^2}).
\]

Thus,
\[
\|I_N\|_{L^4}^2 \leq C \|I_N\|_{L^2}^{1/2} (\|\nabla I_N\|_{L^2} + \|I_N\|_{L^2})^{3/2}.
\]

Expanding,
\[
\int_\Omega I_N^2 M_N dx \leq C \|M_N\|_{L^2} \|I_N\|_{L^2}^{1/2} (\|\nabla I_N\|_{L^2}^{3/2} + \|I_N\|_{L^2}^{3/2}).
\]

Apply Young's inequality with \(p=\frac{4}{3}\), \(q=4\) to
\[
\|M_N\|_{L^2} \|I_N\|_{L^2}^{1/2} \|\nabla I_N\|_{L^2}^{3/2} \leq \varepsilon \|\nabla I_N\|_{L^2}^2 + C_\varepsilon \|M_N\|_{L^2}^4 \|I_N\|_{L^2}^2.
\]

%For the term \(\|M_N\|_{L^2} \|I_N\|_{L^2}^2\), 
%\[
%\|M_N\|_{L^2} \|I_N\|_{L^2}^2 \leq \frac{1}{2}\left(\|M_N\|_{L^2}^2 + \|I_N\|_{L^2}^4\right).
%\]

%use Young with exponents \(p=4, q=4/3\):
%\[
%\|M_N\|_{L^2} \|I_N\|_{L^2}^2 \leq \frac{\|M_N\|_{L^2}^4}{4 \delta^3} + \frac{3 \delta}{4} \|I_N\|_{L^2}^{8/3}.
%\]
%
%To control the nonlinear term \( \|I_N\|_{L^2}^{8/3} \) arising in the energy estimates, we use a nonlinear Young-type inequality: for any \( \eta > 0 \), there exists a constant \( C_\eta > 0 \) such that
%\[
%\|I_N\|_{L^2}^{\frac{8}{3}} \leq \eta \|I_N\|_{L^2}^{6} + C_\eta.
%\]
%This allows absorption into existing \( L^2 \)-based estimates when higher-order norms (such as \( \|I_N\|_{L^2}^6 \)) are controlled, enabling closure of the energy inequality via Grönwall's lemma.

%Finally, by interpolation,
%\[
%\|I_N\|_{L^2}^{8/3} \leq C_\eta \|I_N\|_{L^2}^6 + C_\eta,
%\]
Finally, by this estimation, we completed the proof.
\end{proof}

\subsubsection{Summary of a priori estimate}

Collecting estimates and using boundedness of \(U\),
$$
\begin{array}{rcl}
\dfrac{d}{dt} \|I_N\|_{L^2}^2 + \|\nabla I_N\|_{L^2}^2 & \leq & C \Big(\|I_N\|_{L^2}^2 + \|I_N\|_{L^2}^3 + \|I_N\|_{L^2}^6 \\
& & + \|M_N\|_{L^2} \|I_N\|_{L^2} + \|M_N\|_{L^2} \|I_N\|_{L^2}^2 + \|M_N\|_{L^2}^4 \|I_N\|_{L^2}^2\Big).
\end{array}
$$

%Since
%\[
%\|M_N(t)\|_{L^2} \leq \|g\|_{L^1(0,T)} \sup_{s\in[0,t]} \|I_N(s)\|_{L^2},
%\]
This inequality is suitable for Grönwall arguments and uniform bounds on \(\|I_N\|_{L^2}\) and \(\|\nabla I_N\|_{L^2}\).

\subsubsection{Uniform \(L^2\)-bounds for the Galerkin Approximation}

We define
\[
Y(t) := \sup_{s \in [0,t]} \|I_N(s)\|_{L^2(\Omega)}^2,
\]
which is nondecreasing and finite since \(I_N\) is continuous in time.

From the energy inequality for the Galerkin solution \(I_N(t)\), we have for almost every \(t\),
\begin{equation}\label{eq:energy-inequality}
\begin{array}{rcl}
\dfrac{d}{dt} \|I_N(t)\|_{L^2}^2 + \|\nabla I_N(t)\|_{L^2}^2 & \leq & C \Big(\|I_N(t)\|_{L^2}^2 + \|I_N(t)\|_{L^2}^3 + \|I_N(t)\|_{L^2}^6  + \|M_N(t)\|_{L^2} \|I_N(t)\|_{L^2} \\
& & + \|M_N(t)\|_{L^2} \|I_N(t)\|_{L^2}^2 + \|M_N(t)\|_{L^2}^4 \|I_N(t)\|_{L^2}^2\Big).
\end{array}
\end{equation}
where 

\[
M_N(t) = \int_0^t g(t-s) I_N(s) \, ds.
\]

\begin{lemma}[Integral Inequality for \(Y(t)\)]\label{lem:intineqY}
The function \(Y:[0,T] \to \mathbb{R}_+\) defined by
\[
Y(t) := \sup_{s \in [0,t]} \|I_N(s)\|_{L^2(\Omega)}^2
\]
satisfies the integral inequality
\[
Y(t) \leq Y(0) + \int_0^t  C \Big(Y(s) + Y(s)^{3/2} + Y(s)^3\Big) ds, \quad \forall t \in [0,T].
\]
\end{lemma}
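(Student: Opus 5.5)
Starting from the energy inequality \eqref{eq:energy-inequality}, I drop the nonnegative gradient term and integrate in time from $0$ to $s$ for any $s\in[0,t]$:
\[
\|I_N(s)\|_{L^2}^2 \le \|I_N(0)\|_{L^2}^2 + C\int_0^s \Phi(\tau)\,d\tau ,
\]
where $\Phi(\tau)$ denotes the bracket on the right side of \eqref{eq:energy-inequality}. Since $\Phi\ge 0$, the integral over $[0,s]$ is bounded by the integral over $[0,t]$. Taking the supremum over $s\in[0,t]$ then gives $Y(t)\le Y(0)+C\int_0^t\Phi(\tau)\,d\tau$, because $\|I_N(0)\|_{L^2}^2=Y(0)$. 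It remains to bound $\Phi(\tau)$ pointwise by $C\big(Y(\tau)+Y(\tau)^{3/2}+Y(\tau)^3\big)$.

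The key auxiliary bound is on the memory term. By Minkowski's integral inequality (Bochner integral in $L^2(\Omega)$),
\[
\|M_N(\tau)\|_{L^2}\le \int_0^\tau g(\tau-\sigma)\|I_N(\sigma)\|_{L^2}\,d\sigma \le \|g\|_{L^1(0,\infty)}\,Y(\tau)^{1/2}= Y(\tau)^{1/2},
\]
using $g\ge 0$ and $\int_0^\infty g=1$. Also $\|I_N(\tau)\|_{L^2}^2\le Y(\tau)$. Substituting term by term:
\begin{itemize}
\item $\|I_N\|^2\le Y$;
\item $\|I_N\|^3\le Y^{3/2}$;
\item $\|I_N\|^6\le Y^3$;
\item $\|M_N\|\,\|I_N\|\le Y$;
\item $\|M_N\|\,\|I_N\|^2\le Y^{3/2}$;
\item $\|M_N\|^4\|I_N\|^2\le Y^2\cdot Y=Y^3$.
\end{itemize}
Thus $\Phi(\tau)\le C'\big(Y+Y^{3/2}+Y^3\big)(\tau)$, which after renaming constants is the claimed inequality. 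Since $Y$ is nondecreasing and continuous (because $I_N\in C^1([0,T];V_N)$), the right side is well defined.

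The main point requiring care is the first step: the energy inequality is differential and holds for a.e.\ $t$, while $Y$ is a supremum and need not be differentiable. This is why I work with the integrated form for each fixed $s$ before taking the supremum, rather than differentiating $Y$. Absolute continuity of $t\mapsto\|I_N(t)\|_{L^2}^2$ holds because $I_N$ is $C^1$ with values in the finite-dimensional space $V_N$. The memory bound is the only place the kernel enters, and it is uniform in $N$ and $T$ since $g\in L^1$ with unit mass. I expect no further obstacle.
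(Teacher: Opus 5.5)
Your proof is correct and follows essentially the same route as the paper. You bound the memory term via Minkowski by $\|M_N(\tau)\|_{L^2}\le \|g\|_{L^1}\,Y(\tau)^{1/2}\le Y(\tau)^{1/2}$, substitute term by term into the energy inequality using $\|I_N\|_{L^2}^2\le Y$, then integrate in time and take the supremum. Integrating up to $s\le t$ and using nonnegativity of the integrand before taking the supremum simply makes explicit the monotonicity step the paper invokes at the end.
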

\begin{proof}

Using Minkowski's and Young's convolution inequalities, we estimate
\[
\|M_N(t)\|_{L^2} \leq \|g\|_{L^1(0,T)} \sup_{s \in [0,t]} \|I_N(s)\|_{L^2} = \|g\|_{L^1(0,T)} \sqrt{Y(t)}.
\]
Since $g$ is the log-normal distribution, then $ \|g\|_{L^1(0,T)}\leq 1$, therefore,
\[
\|M_N(t)\|_{L^2} \leq \sqrt{Y(t)}.
\]

Substituting into \eqref{eq:energy-inequality} yields
$$
\begin{array}{rcl}
\dfrac{d}{dt} \|I_N(t)\|_{L^2}^2 + \|\nabla I_N(t)\|_{L^2}^2 & \leq & C \Big(\|I_N(t)\|_{L^2}^2 + \|I_N(t)\|_{L^2}^3 + \|I_N(t)\|_{L^2}^6 \\
& &  + \|I_N(t)\|_{L^2}\sqrt{Y(t)} + \|I_N(t)\|_{L^2}^2\sqrt{Y(t)} + \|I_N(t)\|_{L^2}^2 Y(t)^2\Big).
\end{array}
$$

Since \(\|I_N(t)\|_{L^2}^2 \leq Y(t)\), we get
$$\dfrac{d}{dt} \|I_N(t)\|_{L^2}^2 \leq C \Big(Y(t) + Y(t)^{3/2} + Y(t)^3\Big).$$

Integrating over \([0,t]\) gives
\[
\|I_N(t)\|_{L^2}^2 \leq \|I_N(0)\|_{L^2}^2 + \int_0^t  C \Big(Y(s) + Y(s)^{3/2} + Y(s)^3\Big) ds.
\]

Using the definition of \(Y(t)\), it follows that
\[
Y(t) = \sup_{r \in [0,t]} \|I_N(r)\|_{L^2}^2 \leq Y(0) + \int_0^t  C \Big(Y(s) + Y(s)^{3/2} + Y(s)^3\Big) ds.
\]

The energy inequality \eqref{eq:energy-inequality} and the bounds on \(M_N\) stated above, together with the monotonicity of the supremum function complete the proof of the lemma.
\end{proof}

\medskip
To obtain a uniform (in time) bound for \(Y(t)\), we apply the following nonlinear integral inequality.

\begin{lemma}[Bihari's Inequality \cite{bihari}]\label{lem:Bihari}
Let \(Y:[0,T] \to \mathbb{R}_+\) be a continuous function satisfying
\[
Y(t) \leq Y_0 + \int_0^t f(Y(s)) \, ds,
\]
where \(f : [0,\infty) \to [0,\infty)\) is continuous and nondecreasing. Then
\[
F(Y(t)) \leq F(Y_0) + t,
\]
where \(F(y)\) is defined by
\[
F(y) := \int_{y_0}^y \frac{1}{f(s)} \, ds,
\]
for any \(y_0 > 0\) such that \(f(s) > 0\) for \(s \geq y_0\). Consequently,
\[
Y(t) \leq F^{-1}(F(Y_0) + t),
\]
for all \(t\) such that \(F(Y_0) + t < F(\infty)\).
\end{lemma}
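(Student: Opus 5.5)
The plan is the classical comparison argument for Bihari's inequality: dominate $Y$ by the right-hand side of the integral inequality, show that this majorant satisfies a differential inequality, and then separate variables using $F$. Set
\[
W(t) := Y_0 + \int_0^t f(Y(s))\,ds, \qquad t\in[0,T].
\]
Since $Y$ is continuous and $f$ is continuous, the integrand $s\mapsto f(Y(s))$ is continuous. Hence $W$ is $C^1$, with $W'(t) = f(Y(t))$ and $W(0)=Y_0$. By hypothesis $Y(t)\le W(t)$, and because $f$ is nondecreasing we get the autonomous differential inequality
\[
W'(t) = f(Y(t)) \le f(W(t)).
\]
Moreover $W$ is nondecreasing, since $f\ge 0$.

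The second step is to divide by $f(W(t))$ and integrate. By the chain rule, $\frac{d}{dt}F(W(t)) = W'(t)/f(W(t)) \le 1$, which gives $F(W(t)) \le F(W(0)) + t = F(Y_0)+t$. Since $F$ is strictly increasing wherever $f>0$, and $Y\le W$, we obtain
\[
F(Y(t))\le F(W(t)) \le F(Y_0)+t.
\]
Inverting gives $Y(t)\le W(t)\le F^{-1}(F(Y_0)+t)$. This inversion is legitimate exactly as long as $F(Y_0)+t$ stays in the range of $F$, which is the condition $F(Y_0)+t<F(\infty)$ in the statement. For the application in Lemma \ref{lem:intineqY}, $f(y)=C(y+y^{3/2}+y^3)$ has $\int^\infty dy/f(y)<\infty$. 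So $F(\infty)<\infty$, and this produces the finite time $T^*$ of Lemma \ref{lem:apriori}.

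The main obstacle is the division by $f$, which requires $f(W(t))>0$ and that $F$ be defined along the trajectory. I would handle this as follows.
\begin{itemize}
\item Choose the base point $y_0$ with $0<y_0\le Y_0$. If $Y_0>0$ and $f>0$ on $[y_0,\infty)$, then $W(t)\ge Y_0\ge y_0$ because $W$ is nondecreasing, so the division is valid everywhere.
\item If $Y_0=0$ or $f$ can vanish, replace $Y_0$ by $Y_0+\eta$ and $f$ by $f+\eta$ with $\eta>0$. The hypothesis still holds with these modified data, so the argument above applies. Then let $\eta\to0$, using continuity of $F^{-1}$ and monotone convergence of the integrals $\int dy/(f+\eta)$.
\item One should also note that the constant in $F(Y(t))\le F(Y_0)+t$ does not depend on the choice of $y_0$, since changing $y_0$ shifts both sides by the same amount.
\end{itemize}
With these points checked, the conclusion follows directly from the comparison argument.
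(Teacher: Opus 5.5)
The paper gives no proof of this lemma and only cites Bihari; your argument is the standard proof of Bihari's inequality, and it is correct. It takes the majorant $W(t)=Y_0+\int_0^t f(Y(s))\,ds$, uses $W'\le f(W)$, and integrates $\frac{d}{dt}F(W(t))\le 1$. Your $\eta$-regularization is also sound: the limit passes by monotone convergence, since $F_\eta\uparrow F$ on $[y_0,\infty)$ and $F_\eta\downarrow F$ below $y_0$ as $\eta\downarrow 0$. It is worth keeping, because the paper later applies the lemma with $Y(0)=0$ in the positivity proof, where $F(Y_0)=-\infty$ and the unregularized argument does not apply.
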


By Lemma \ref{lem:Bihari}, the integral inequality implies that there exists \(T^* > 0\) depending on the initial data and the constant $C$, such that \(Y(t)\) remains bounded on \([0,T^*]\). 

In our case, the function
\[
f(y) :=  C \Big(y + y^{3/2} + y^3\Big)
\]
is continuous and strictly increasing on \([0,\infty)\), so the hypotheses of the lemma are satisfied.

We define
\[
F(y) := \int_{Y_0}^y \frac{1}{C( s + s^{3/2} + s^3)} \, ds.
\]
The function \(F:[Y_0,\infty)\rightarrow[0,F(\infty))\) is strictly increasing, hence \(F^{-1}\) is well-defined on \([0, F(\infty))\). Applying Bihari's inequality yields
\[
Y(t) \leq F^{-1}(t),
\]
which is finite for all \(t \in [0,T^*]\), where
\[
T^* := F(\infty) - F(Y_0) = \int_{Y_0}^\infty \frac{1}{C( s + s^{3/2} + s^3)} \, ds < \infty.
\]

\medskip

\noindent
%\textbf{Conclusion.} 
In conclusion, there exists a time \(T^* > 0\) such that \(Y(t)\) remains bounded on \([0,T^*]\). %Hence, we obtain uniform-in-\(N\) control of \(\|I_N(t)\|_{L^2(\Omega)}\) and \(\|\nabla I_N(t)\|_{L^2(\Omega)}\) on \([0,T^*]\). This is a crucial step for compactness arguments and for proving the existence of weak solutions.

This guarantees uniform-in-\(N\) control of
\[
\|I_N(t)\|_{L^2(\Omega)}^2 \leq Y(t) \leq C, \quad \text{for all } t \in [0,T^*].
\]

In particular, this estimate combined with the energy inequality also yields
\[
\|\nabla I_N(t)\|_{L^2}^2 \leq C, \quad \text{for all } t \in [0,T^*],
\]
providing uniform control of the spatial gradient.

This uniform boundedness is a key ingredient for passing to the limit as \(N \to \infty\) and proving existence of weak solutions.

\subsection{Passage to the limit in $N$ and existence of Weak Solution}
In light of the uniform a priori bounds, it follows from a classical compactness result; see for instance \cite{lions1969quelques} (Theorem 5.1 p.58) that the sequence $I_N$ has a strongly convergent subsequence, which we do not bother to relabel, in $L^2(\Omega_{T^*})$. So, there exists $I\in L^2(0,T^*;V)$ with $I_t\in L^2(0,T^*;V')$ such that:
\begin{itemize}[-]
    \item \(I_N\rightarrow I\) strongly in $L^2(\Omega_{T^*})$ and a.e. in $\Omega_{T^*}$.
    \item \(\nabla I_N \rightharpoonup \nabla I\) weakly in $L^2(\Omega_{T^*})$.
\end{itemize}
Moreover, the continuity of \( R(I_N):=(U-I_N)(I_N-M_N)\) and its bound
in \(L^2(\Omega_{T^*})\) which is a consequence of the inequalities obtained in Lemma \ref{lem:apriori}, along with a classical result; see \cite{lions1969quelques} Lemma 1.3 p 12, yield the weak convergence of $R(I_N)$ in $L^2(\Omega)$. Gathering all these results, we see that the limit $I$ verifies the weak formulation as defined in Definition \ref{def:weak}.\\
This ends the proof of Theorem \ref{thm:Existence of Weak solution}.\\
{{
\begin{proposition}
    If the initial state $I_0(x)$ verifies $0<I_0(x)<M$ for all $x\in\Omega$, then the weak solution $I(t,x)$ of System \eqref{eq:mainPDE} satisfies $$0\leq I(t,x)\leq M, \text{for all }t\in[0,T^*]\text{ and a.e. }x\in\Omega. $$ 
\end{proposition}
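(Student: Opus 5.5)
The plan is a Stampacchia truncation argument applied to the weak formulation of Definition~\ref{def:weak}: test the equation against $I^-=\max(-I,0)$ for the lower bound and against $(I-M)^+$ for the upper bound. Both truncations lie in $L^2(0,T^*;V)$ because $I\in L^2(0,T^*;V)$. Since $I_t\in L^2(0,T^*;V')$, the usual chain rule applies:
\[
\langle I_t,(I-M)^+\rangle=\tfrac12\tfrac{d}{dt}\|(I-M)^+\|_{L^2}^2,\qquad
\langle I_t,-I^-\rangle=\tfrac12\tfrac{d}{dt}\|I^-\|_{L^2}^2 .
\]
The diffusion terms then give $\|\nabla (I-M)^+\|_{L^2}^2\ge 0$ and $\|\nabla I^-\|_{L^2}^2\ge0$, with no boundary contribution because of the homogeneous Neumann condition. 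The whole question therefore reduces to the sign of the reaction term on the sets $\{I<0\}$ and $\{I>M\}$.

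\textbf{Lower bound.} On $\{I<0\}$ we have $U-I>0$, since $U>0$. Write the memory term as $I-\mathcal M$ with $\mathcal M=\int_0^t g(t-s)I(s)\,ds$, and split $I=I^+-I^-$, so that $\mathcal M=\mathcal M^+-\mathcal M^-$ where $\mathcal M^{\pm}=\int_0^t g(t-s)I^{\pm}(s)\,ds\ge0$. Testing against $-I^-$ gives
\[
\tfrac12\tfrac{d}{dt}\|I^-\|^2+\|\nabla I^-\|^2=-\beta^\star\int_\Omega (U-I)\,(I-\mathcal M)\,I^-\,dx
\le \beta^\star\int_\Omega (U+I^-)\,\bigl((I^-)^2+\mathcal M^- I^-\bigr)\,dx .
\]
To get this inequality, bound $-(I-\mathcal M)I^-=(I^-)^2+\mathcal M^+ I^- -\mathcal M^- I^-$ from below and above appropriately, using that the factor $-(U-I)$ multiplies it with a negative sign. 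The terms with the favorable sign are dropped, which means the contribution of $\mathcal M^+$ is simply discarded.

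Two further ingredients close the estimate. First, $\|\mathcal M^-(t)\|_{L^2}\le \sup_{s\le t}\|I^-(s)\|_{L^2}$ by Young's convolution inequality with $\|g\|_{L^1}\le1$, exactly as in Lemma~\ref{lem:intineqY}. Second, the cubic contribution $(I^-)^3$ is absorbed by the gradient term through the interpolation and Sobolev argument of Lemma~\ref{lemma:cubic-term}; alternatively one can use the $L^\infty$-bound in time of $\|I\|_{L^2}$ from Lemma~\ref{lem:apriori}. Setting $Z(t)=\sup_{s\le t}\|I^-(s)\|^2$, these give $Z(t)\le Z(0)+C\int_0^t\bigl(Z+Z^{3/2}+Z^3\bigr)\,ds$. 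Since $Z(0)=\|I_0^-\|^2=0$, Bihari's Lemma~\ref{lem:Bihari} forces $Z\equiv0$, because the nonlinearity $f(y)$ is of order $y$ near $0$, which is the Gronwall-type uniqueness case. Hence $I\ge0$.

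\textbf{Upper bound.} Once $I\ge0$ is known, we also have $\mathcal M\ge0$. On $\{I>M\}$ we have $U-I<0$ because $U<M$. The bracket satisfies $I-\mathcal M\ge I-\sup_{s\le t}I(s)\int_0^t g$, which is not obviously nonnegative, so here I write $\mathcal M\le M\int_0^t g+\int_0^t g(t-s)(I-M)^+(s)\,ds$. This gives, on $\{I>M\}$,
\[
I-\mathcal M\;\ge\;(I-M)+M\Bigl(1-\int_0^tg\Bigr)-\mathcal M_+,\qquad \mathcal M_+:=\int_0^t g(t-s)(I-M)^+(s)\,ds .
\]
The first two terms are nonnegative and are multiplied by the negative factor $U-I$, so they can be dropped. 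The remaining term is bounded by $\beta^\star\int_\Omega (I-U)\,\mathcal M_+\,(I-M)^+\,dx$, where $I-U\le (I-M)^++M$. This is handled exactly as in the lower-bound case, and Bihari's lemma with zero initial datum $\|(I_0-M)^+\|^2=0$ yields $(I-M)^+\equiv0$.

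\textbf{Main obstacle.} The hard step is the memory term: it is nonlocal in time, so the pointwise sign arguments of the ODE case (Propositions~\ref{prop:Ibound}--\ref{prop:IaPos}) are not available. Everything depends on controlling the convolution by $\sup_{s\le t}$ of the truncated quantity and then using the superlinear-but-vanishing-at-zero Bihari estimate. A secondary technical point is justifying the use of truncations as test functions and the chain rule for the weak solution. If that cannot be done directly, I would perform the same estimates on the Galerkin approximations $I_N$, or on a regularized problem, and pass to the limit using the a.e. convergence obtained in the existence proof.
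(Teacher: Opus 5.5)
\textbf{The sign of the memory term in your lower bound is reversed, and the estimate does not close once it is fixed.} With your conventions ($I^-=\max(-I,0)$, test function $-I^-$), the right-hand side is $\beta^\star\int_\Omega (U-I)\,\bigl[-(I-\mathcal M)I^-\bigr]\,dx$. The factor in front of $-(I-\mathcal M)I^-$ is $U-I=U+I^->0$ on $\{I<0\}$, not $-(U-I)$. Hence
\[
\tfrac12\tfrac{d}{dt}\|I^-\|^2+\|\nabla I^-\|^2=\beta^\star\int_\Omega (U+I^-)\bigl((I^-)^2+\mathcal M^+I^- -\mathcal M^-I^-\bigr)\,dx .
\]
The term you may discard is $-\mathcal M^-I^-\le0$, whereas $\mathcal M^+I^-\ge0$ must be kept. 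As a pointwise check, if $I=-\delta<0$ and $\mathcal M=m>0$, the integrand $(U-I)(I-\mathcal M)(-I^-)$ equals $(U+\delta)(\delta+m)\delta>0$. So you kept the harmless memory term and dropped the dangerous one.

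The surviving term $\beta^\star\int_\Omega U\,\mathcal M^+I^-\,dx$ is only \emph{linear} in $I^-$. Its coefficient $\mathcal M^+$ is the memory of the positive part of $I$, which is of order one and not controlled by $Z$. What the estimate actually gives is $Z(t)\le C\int_0^t\bigl(\sqrt Z+Z+Z^{3/2}+Z^3\bigr)\,ds$. Since $\int_0 dy/\sqrt y<\infty$, Bihari's lemma with $Z(0)=0$ yields only $Z(t)=O(t^2)$, not $Z\equiv0$; compare $Z'=C\sqrt Z$ with the solution $Z=(Ct/2)^2$.

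This is a structural problem, not bookkeeping. At $I=0$ the reaction equals $-\beta^\star U\mathcal M$, which is strictly negative wherever $I$ was positive in the recent past. So the nonlinearity is not quasi-positive at the lower barrier, and the truncation estimate cannot close without extra information about $\mathcal M$ on $\{I\le0\}$. In the ODE that information came from the time monotonicity of $I$, which gives $\mathcal M\le I\int_0^tg<I$. Diffusion destroys this, so a genuinely new ingredient is needed.

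The paper's own proof does not provide it either. It bounds the same term by $\|\mathcal MI\|_{L^2}\|I^-\|_{L^2}$ and then proceeds ``as in Lemma~\ref{lem:intineqY}''. That step tacitly treats $\|\mathcal MI\|_{L^2}$ as controlled by $\sup_s\|I^-(s)\|_{L^2}$, which is the same term you discarded.

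Your upper bound, by contrast, is correct and more careful than the paper's. The splitting $\mathcal M\le M\int_0^tg+\mathcal M_+$ makes every surviving term at least quadratic in $(I-M)^+$, so Bihari with $f(y)\sim Cy$ near $0$ closes. It also does not use $I\ge0$, since $I(s)\le M+(I(s)-M)^+$ holds regardless of sign.
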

\begin{proof}We prove first that $I(t,x)\geq 0 $ for all $t\in[0,T^*]$ and a.e. $x\in\Omega$.
Let $I$ be the weak solution on $(0,T^*)\times\Omega$. We define the negative part $I^-(t,x) = \min(I(t,x), 0)$. Our goal is to show that the energy $E(t) = \|I^-(t)\|_{L^2}^2$ satisfies $E(t) = 0$ for all $t \in [0, T^*]$. Using $I^-$ as a test function in the weak formulation, we get:
$$\frac{1}{2} \frac{d}{dt} \|I^-(t)\|_{L^2}^2 + \|\nabla I^-(t)\|_{L^2}^2 = \int_\Omega \beta(U-I)(I - \mathcal{M}I) I^- \, dx$$
where $\mathcal{M}I=\int_0^tg(t-s)I(s,x)ds$. Using the same steps as in the proof of Lemma \ref{lem:apriori}, and noting that $II^-=(I^-)^2$ and $I^2I^-=(I^-)^3$, one obtains:
$$
\begin{array}{rcl}
\dfrac{d}{dt} \|I^-\|_{L^2}^2 + \|\nabla I^-\|_{L^2}^2 & \leq & C \Big(\|I^-\|_{L^2}^2 + \|I^-\|_{L^2}^3 + \|I^-\|_{L^2}^6 \\
& & + \|\mathcal{M}I\|_{L^2} \|I^-\|_{L^2} + \|\mathcal{M}I\|_{L^2} \|I^-\|_{L^2}^2 + \|\mathcal{M}I\|_{L^2}^4 \|I^-\|_{L^2}^2\Big).
\end{array}
$$
Define \[
Y(t) := \sup_{s \in [0,t]} \|I^-(s)\|_{L^2(\Omega)}^2.
\]
Then, proceeding as in the proof of Lemma \ref{lem:intineqY}, one finds:
\[
Y(t) = \sup_{r \in [0,t]} \|I^-(r)\|_{L^2}^2 \leq Y(0) + \int_0^t  C \Big(Y(s) + Y(s)^{3/2} + Y(s)^3\Big) ds.
\]
Knowing that $I_0(x)>0$, we have $Y(0)=0$. Hence,
\[
Y(t) \leq \int_0^t  f \Big(Y(s)\Big) ds,
\]
where \(f(y)=C(y+y^{3/2}+y^3)\). Now, since $\int_0^\epsilon \frac{1}{f(y)} dy \to \infty$, as $\epsilon\to0^+$, Bihari's inequality implies that $Y(t)=0$ for all $t\in[0,T^*]$. Therefore, $I^-(t,x)=0$ for all $t\in[0,T^*]$ and a.e. $x\in\Omega$.\\
To show that $I(t,x) \le M$ we also use an energy-based approach where the test function in the weak formulation is considered as $\psi(t,x) = (I(t,x) - M)^+=\max(I-M,0)$. Our goal is to prove that if $\psi(0) = 0$, then $\|\psi(t)\|_{L^2}^2 = 0$ for all $t\in[0,T^*]$. Testing the PDE with $\psi$, one gets:$$\frac{1}{2} \frac{d}{dt} \|\psi(t)\|_{L^2}^2 + \|\nabla \psi(t)\|_{L^2}^2 = \int_\Omega \beta(U-I)(I - \mathcal{M}I) \psi \, dx.$$
Now, repeating the same arguments as in the proof of $I^-(t,x)=0$, but using $$Y(t) = \sup_{s \in [0,t]} \|(I(s) - M)^+\|_{L^2(\Omega)}^2,$$ one can show that $\psi(t,x)=0$ for all $t\in[0,T^*]$ and a.e. $x\in\Omega$. 
\end{proof}}}

\section{Numerical Simulation}\label{sec:numerical}
In this section, we present some numerical results obtained from the models. Throughout this section, we use the rescaled transmission rate $\beta^\star=\beta/M$.
 
%\subsection{Discretization of ($*$)}
%We first set $\gamma = \beta^* \Bar{C}$, and
%$$f(t, I) = \gamma (M - I) \left(I - \int^t_0 g(t-s)I(s)ds\right)$$
%so that our equation becomes 
%$$I'(t) = f(t, I)$$
%and we use the trapezoidal rule to approximate the values of $I$: 
%$$I_{i+1} = I_i + \frac{\Delta t}{2} \left(f(t_i, I_i) + f(t_{i+1}, I_i + \Delta t f(t_i, I_i))\right)$$
%by setting $t_n = t_0 + n\Delta t \quad \forall n$.

%We also approximate the integral by the trapezoidal %rule. Let $$Q_i = \int^{t_i}_0 g(t-s)I(s)ds $$
%and
%$$q_k = g(t_i - t_k) I_k,\quad 0\leq k \leq i$$
%And so we approximate $Q_i$ as 
%\begin{align}
    %Q_i &=  \sum_{k=1}^i \left( \frac{\Delta t}{2} (q_k %+ q_{k-1})\right) \\
    %&= \frac{\Delta t}{2} \left(q_0 + 2(q_1 + \dots + %q_{i-1}) + q_i \right) \\
    %&= \frac{\Delta t}{2} \left(g(t_i)I_0 + 2 (g(t_i - %t_1)I_1 + \dots + g(t_i - t_{i-1})I_{i-1}) + g(0)I_i \right) \\
%\end{align}
%We can now compute the values of $I$ iteratively.

\subsection{Simulation of the Continuous Time Model}

\noindent\textbf{The integro-differential model \eqref{Syst1:IDE}}\\
To simulate the integro-differential system (4), we discretize the time interval using a uniform time step and approximate the integral term with a quadrature rule over the discrete time values. We apply the semi-implicit Euler method to evolve the system forward in time. This allows us to compute the evolution of the susceptible, infected, and recovered populations and to estimate the final infected population $I_{\infty}$ under various parameter values.

Figure \ref{fig:SIR_plot} shows the plot of the susceptible, infected, and recovered populations obtained by the system simulation. As expected and similar to the basic SIR model, the number of actively infected individuals increases, reaches a peak, and then decays back to zero. The cumulative number of infected individuals is nondecreasing and remains bounded by the total population number. 
\begin{figure}[H]
    \centering
    \includegraphics[width=0.9\linewidth]{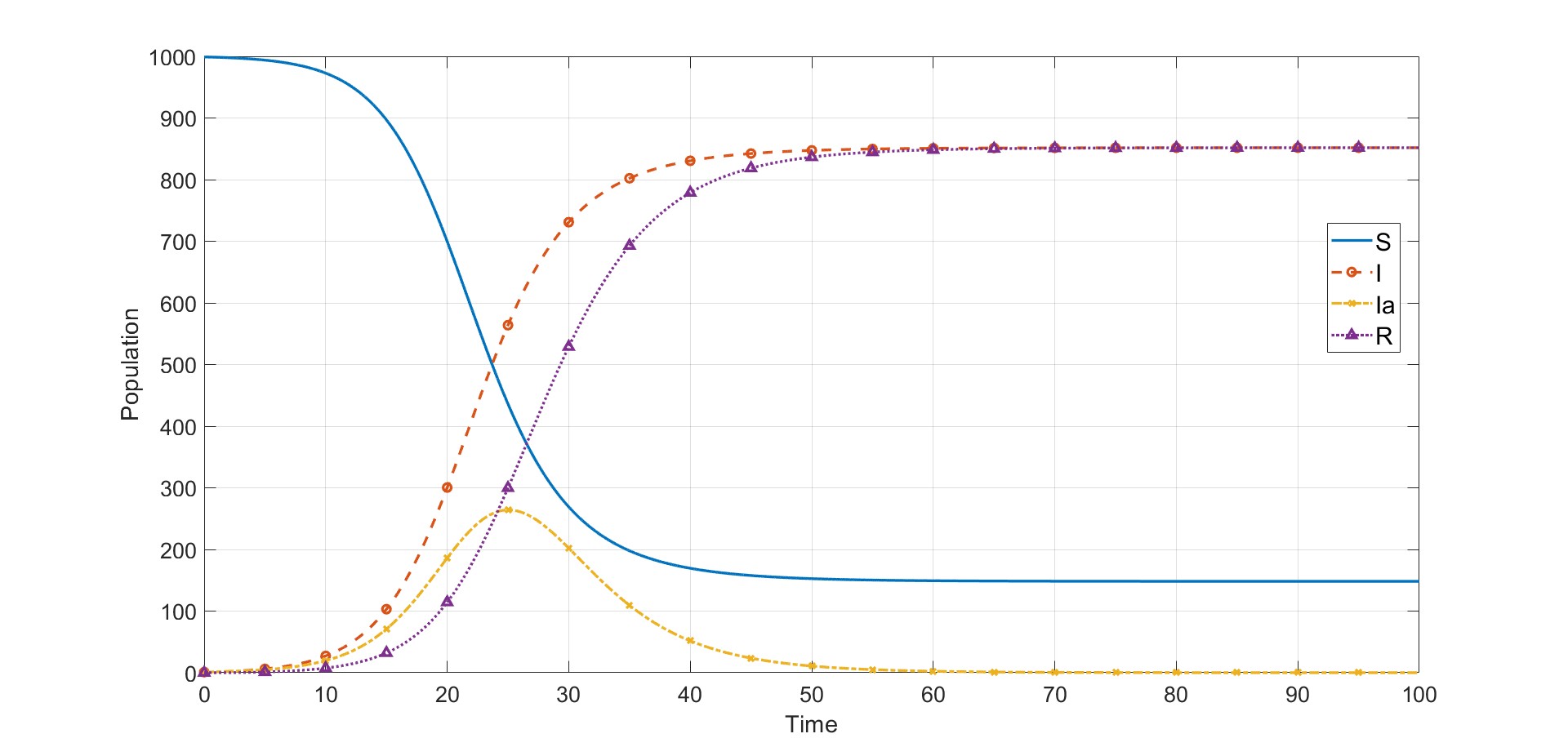}
    \caption{SIR Plot}
    \label{fig:SIR_plot}
\end{figure}

\noindent\textbf{Predicting $I_\infty$ numerically} \\
Figures \ref{fig:Iinf_beta} and \ref{fig:Iinf_I0} show the variation of $I_\infty$ as we vary the parameters $I_0$ and $\beta$. In both, we fix $m = 5.6$ and $st = 3.9$. Figure \ref{fig:Iinf_mu} shows the variation of $I_\infty$ as we vary the parameter $m$. Here, we fix $st = 3.9$. Figure \ref{fig:Iinf_st} shows the variation of $I_\infty$ as we vary the parameter $st$. Here, we fix $m = 5.6$. In both Figures \ref{fig:Iinf_mu} and \ref{fig:Iinf_st}, we fix $I_0 = 1$ and $\beta = 0.4$.\\
We compute the parameters of the log-normal distribution $g$ as 
$$\mu = ln\left(\frac{m^2}{\sqrt{st^2 + m^2}}\right)$$
$$\sigma = \sqrt{ln\left(1+\frac{st^2}{m^2}\right)}$$

We observe that the larger the infection rate $\beta$, the more people will be infected in the epidemic. Also, after exceeding approximately 0.7, the entire population is infected, regardless of the initial number of infections $I_0$. Similarly, the larger $I_0$ is, the larger the cumulative infected population $I_\infty$ will be. However, $I_0$ does not seem to play much of a role in determining whether the entire population is infected or not.

Moreover, as we increase the mean of the incubation period $m$, the total number of infected also increases. That is because the longer the incubation period, the more difficult it becomes for the infection to be contained. After some point around $m=14$, total infection is guaranteed.

Finally, as we vary the standard deviation of the incubation period between $7$ and $9$, we observe a slight decrease in $I_\infty$, followed by a quick increase to the total population after that.

\begin{figure}[H]
    \centering
    \begin{subfigure}[b]{0.45\textwidth}
        \includegraphics[width=\textwidth]{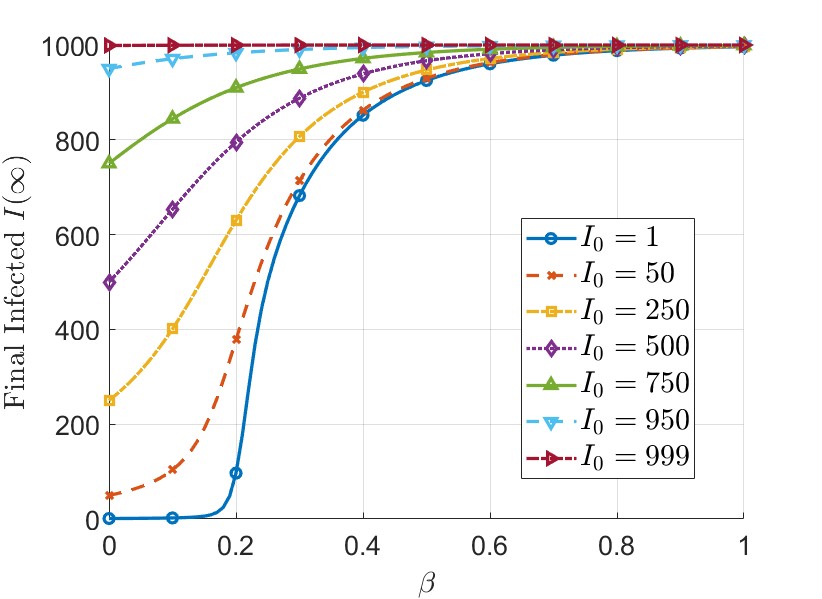}
        \caption{$I_\infty$ vs $\beta$ for different values of $I_0$}
        \label{fig:Iinf_beta}
    \end{subfigure}
    \hfill
    \begin{subfigure}[b]{0.45\textwidth}
        \includegraphics[width=\textwidth]{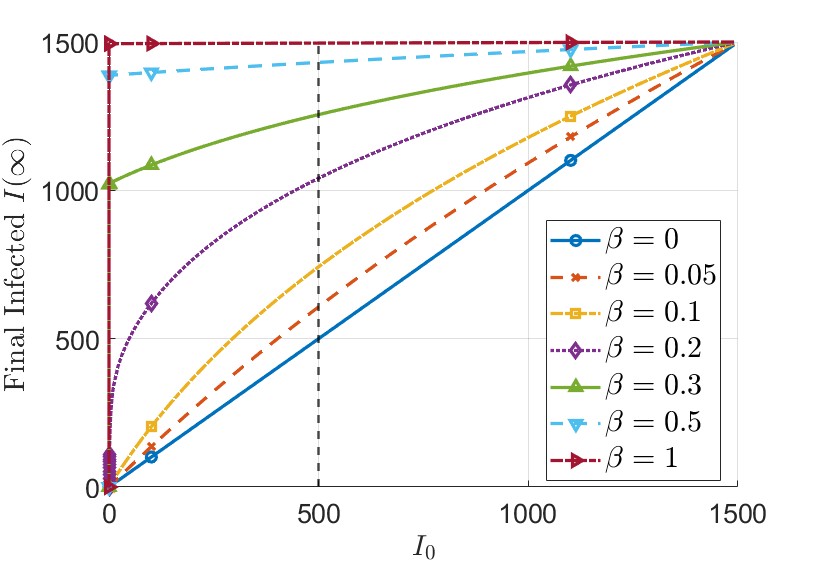}
        \caption{$I_\infty$ vs $I_0$ for different values of $\beta$}
        \label{fig:Iinf_I0}
    \end{subfigure}
    
    \vspace{0.5cm}
    
    \begin{subfigure}[b]{0.45\textwidth}
        \includegraphics[width=\textwidth]{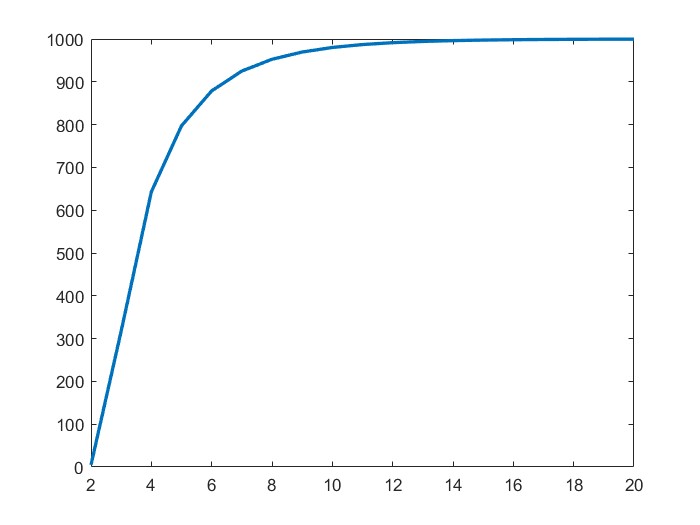}
        \caption{$I_\infty$ vs $m$}
        \label{fig:Iinf_mu}
    \end{subfigure}
    \hfill
    \begin{subfigure}[b]{0.45\textwidth}
        \includegraphics[width=\textwidth]{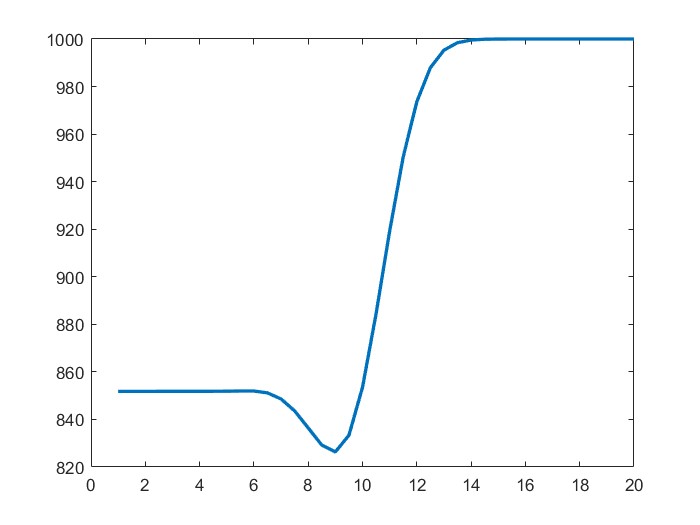}
        \caption{$I_\infty$ vs $st$}
        \label{fig:Iinf_st}
    \end{subfigure}
    
    \caption{Final infected population $I_\infty$ as a function of various parameters.}
    \label{fig:Iinf_combined}
\end{figure}

 \textbf{Phase Portrait $I_a$ vs $S$:} obtained numerically is shown in Figure \ref{phaseportrait}. Notice that the region to the right of the peaks is that in which an epidemic (or an increase in the active infected individuals) occurs while it does not occur to the left of it. We can see that, unlike the basic SIR model, the peaks do not trace a straight line.
\begin{figure}[H]
    \begin{center}
        \includegraphics[width=0.9\textwidth]{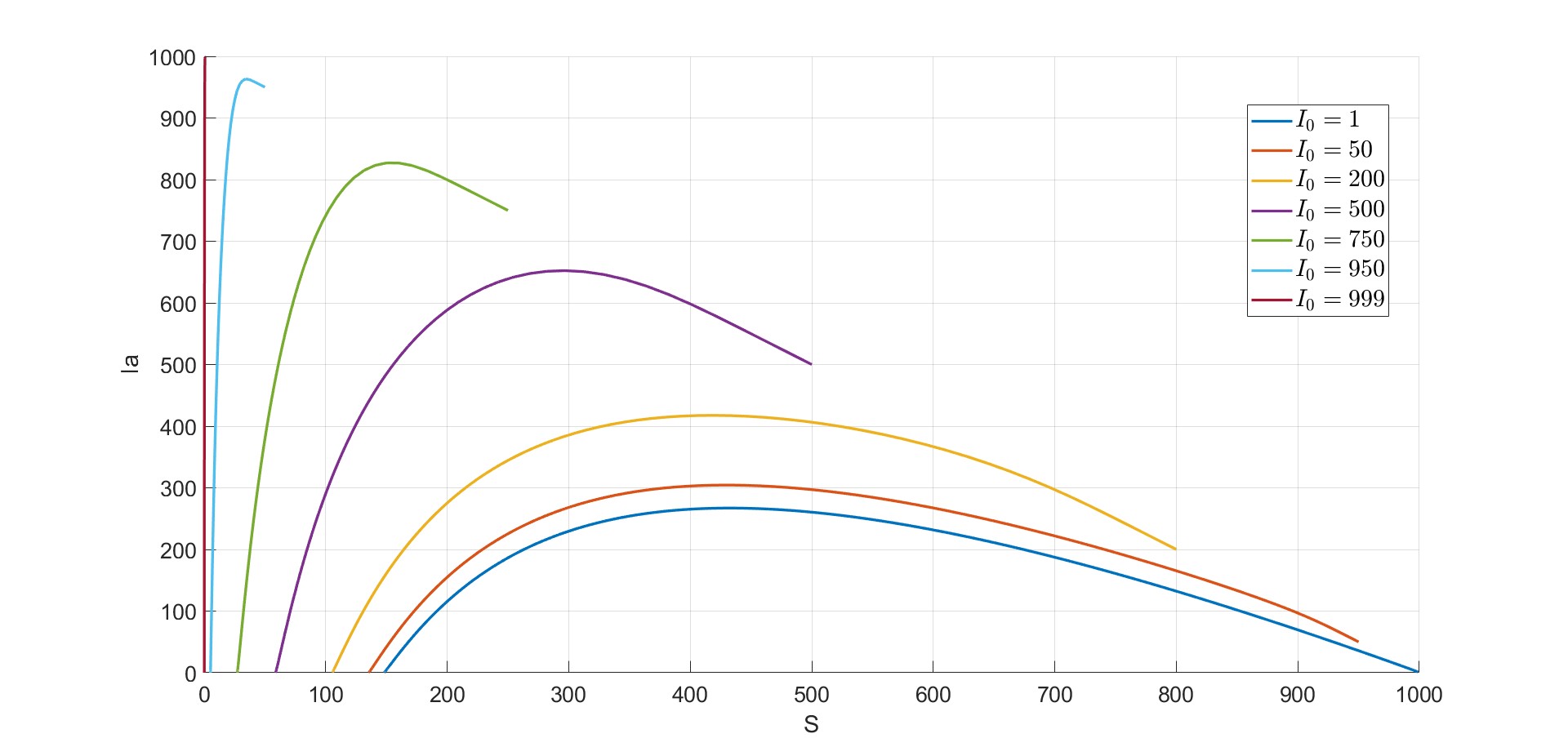}
    \end{center}
    \caption{$S$ vs $I_a$ for different values of $I_0$}
    \label{phaseportrait}
\end{figure}
\subsection{Simulation of the Spatio-Temporal Model }
In this section, we numerically solve the proposed spatio-temporal model \eqref{syst2:PDE}. The example presented is primarily for academic purposes and does not represent any reported data on an epidemic in Lebanon.\\
The domain $\Omega$ is obtained from a picture of the map of Lebanon. We employ a finite element discretization with a Crank-Nicolson time-stepping scheme for the heat equation to obtain the distribution of the total population $U(t,x)$, and a semi-implicit Euler time-stepping scheme to get the distribution of the infected population $I(t,x)$. Nonlinear
terms are solved explicitly, while linear differential operators are handled implicitly.\\
The domain $\Omega$ is discretized with an irregular mesh consisting of $9347$ triangles and $4873$ vertices (see Figure \ref{meshInit}) . The finite element method is implemented in FreeFem++, using $P_1$ elements for variables $U$, $I$, $S$, $I_a$ and $R$. We fix the time step $\Delta t=0.1$ and set the model parameters $\beta=0.005$ and diffusivity $\alpha=100$. The total population is assumed to be about $8.78423\times 10^6$ people concentrated in $11$ cities and towns with the highest population density in the capital ``Beirut''; it appears in red in Figure \ref{meshInit}. Figure \ref{SimFigures} shows the geographical evolution of the epidemic starting from an initial distribution of the infected population in the image to the top left of Figure \ref{SimFigures}. The initial infected population is assumed to be $0.05\%$ of the population in each city/town. Clearly, with the highest population density in the capital, there is a surge in the infected population in Beirut. A similar lower-amplitude increase in the number of infected people is observed in the north of Lebanon. This infection wave propagates to other regions with a delay in the regions with a lower population density. As expected theoretically, the total population stays constant at all times while the total active infected population initially increases to reach a peak and then starts declining again.

\begin{figure}[ht]
  \centering
\includegraphics[width=9cm]{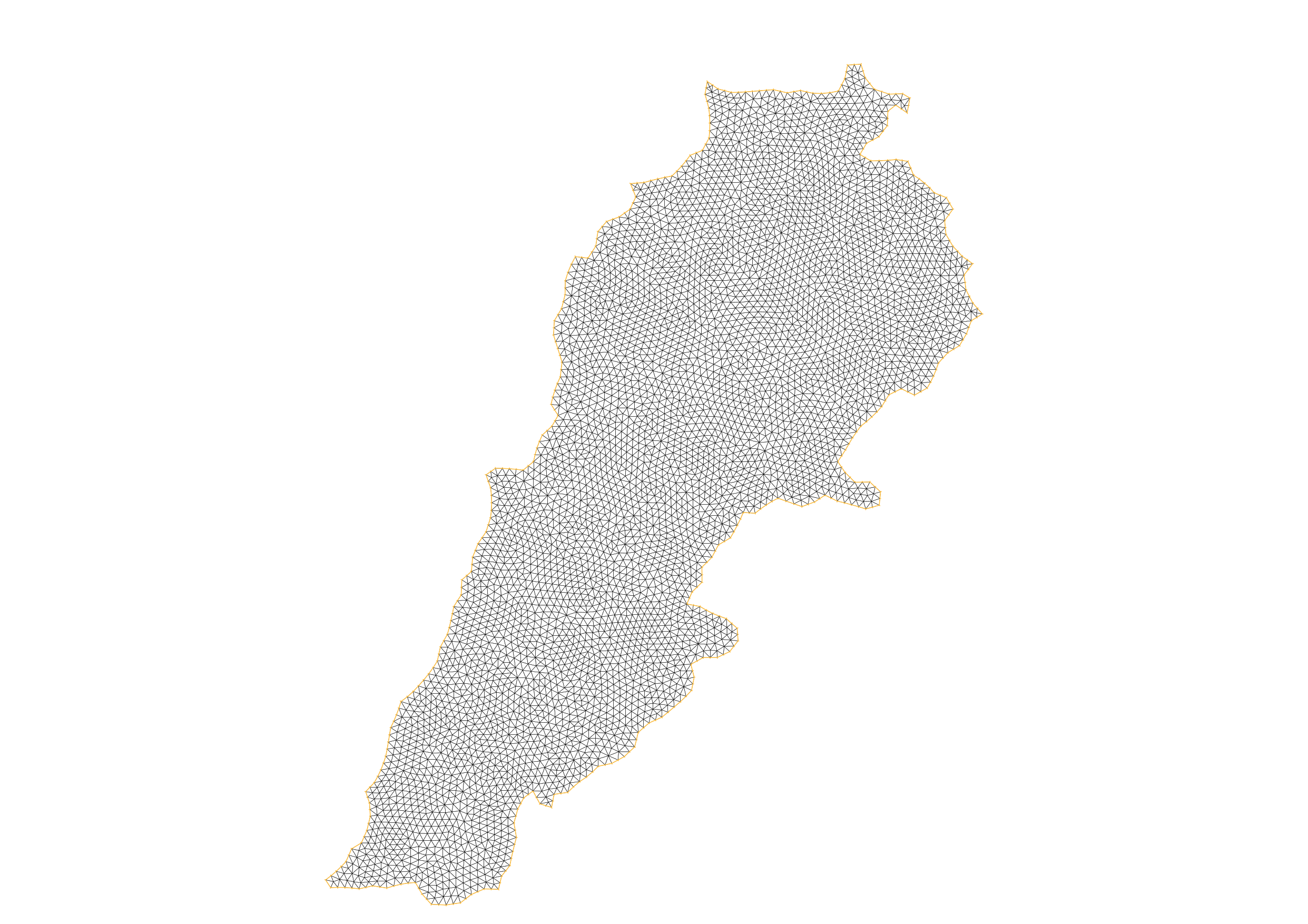}  
\includegraphics[width=4.5cm]{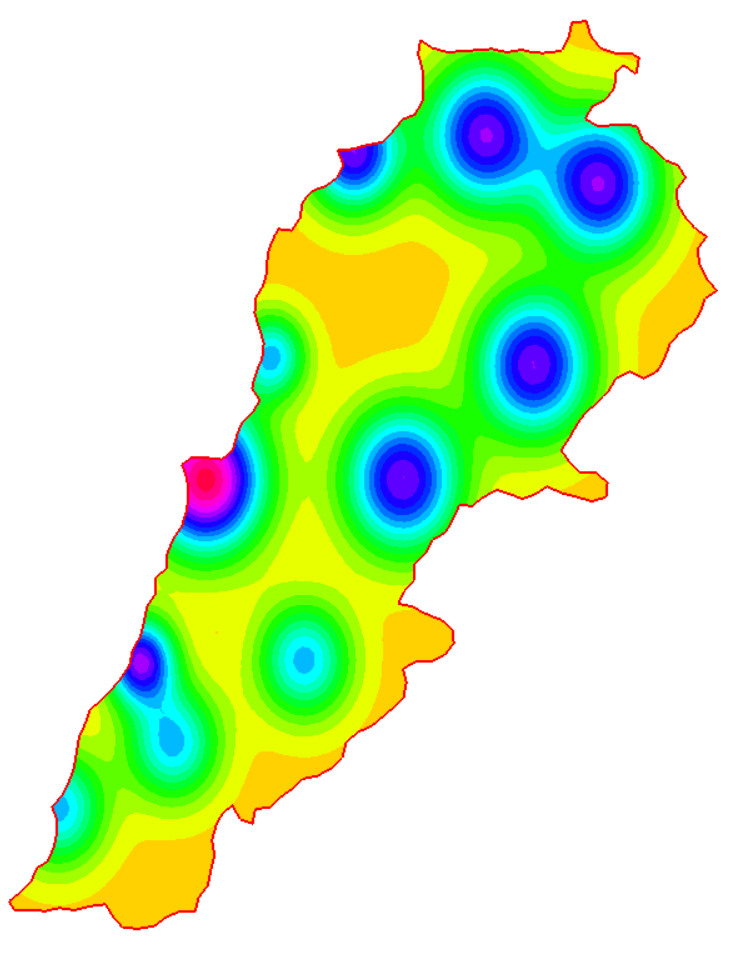}
\caption{Mesh of the map of Lebanon (left) and initial population distribution (right).}
\label{meshInit}
\end{figure}

\begin{figure}[ht]
\includegraphics[width=7cm]{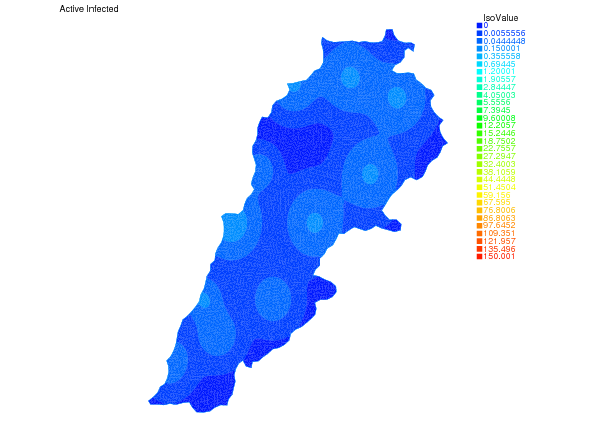}\includegraphics[width=7cm]{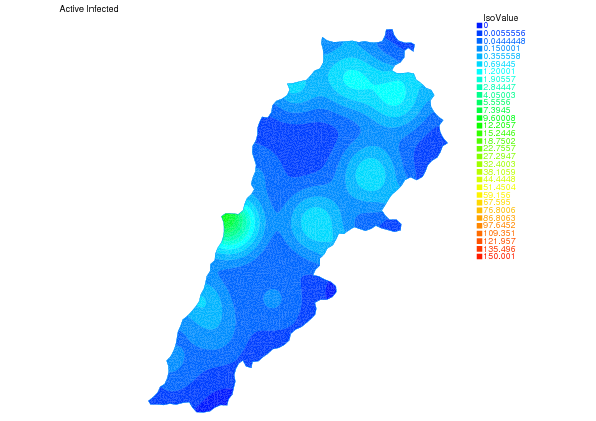}\\
\includegraphics[width=7cm]{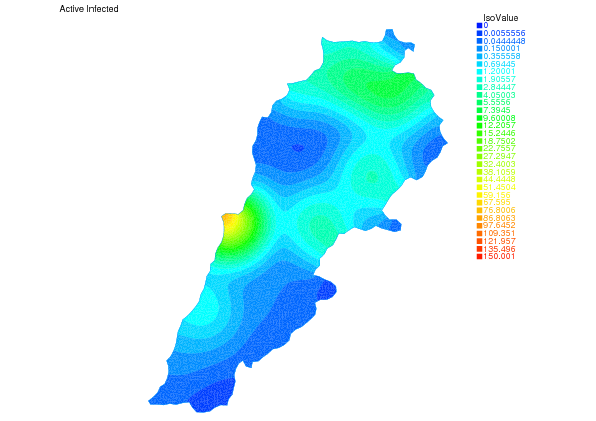}  \includegraphics[width=7cm]{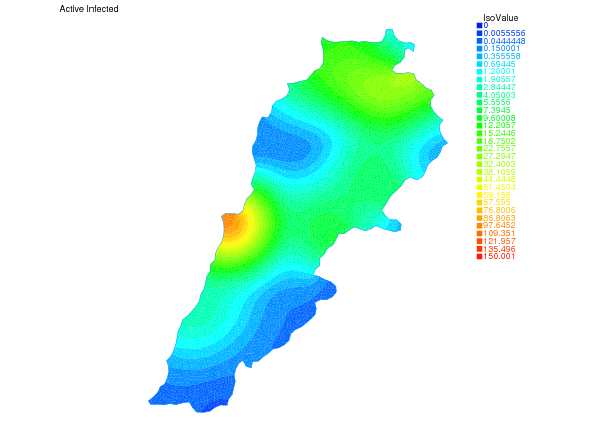}\\
\includegraphics[width=7cm]{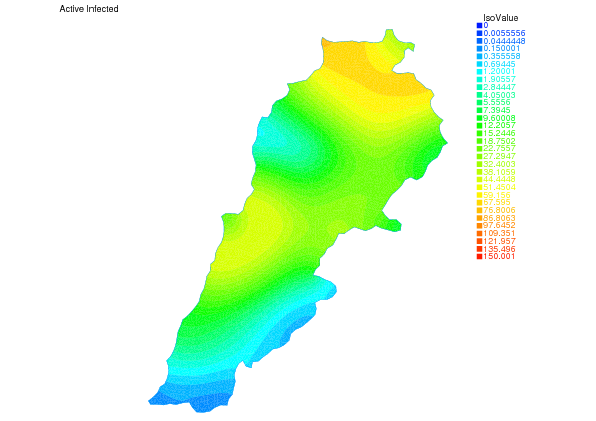}  \includegraphics[width=7cm]{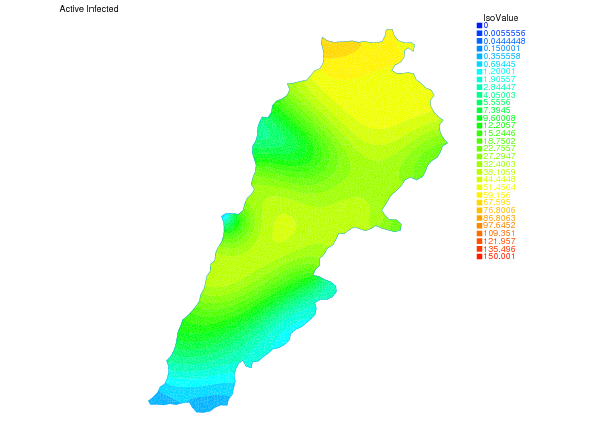}\\
\caption{Distribution of the active infected population $I_a$ at times $t=0.1$, $2.5$, $5$, $7.5$, $12$ and $14$ from top left corner to bottom right corner respectively.}
\label{SimFigures}
\end{figure}

\newpage

\section{Conclusion}\label{sec:Conc}

In conclusion, our work aims to expand the scope of previous models by implementing an integro-differential model designed to capture both the spatial dynamics of disease spread and the probabilistic dynamics of recovery, starting from a discrete framework and then taking the continuum limit. The paper also analyzed theoretical quantities that arise in the solutions of such equations, such as the threshold of epidemic, and the limiting value of infected people.

Moreover, the paper provides numerical approaches to model the system on a computer by discretization. In addition, a simulation was applied to a simplified scenario in Lebanon demonstrating the practical utility of our model, showing notable features like hot spots and infection waves.

Future directions could involve extending the model to include more complex movement terms than diffusion, utilizing a graph-theoretic framework, with nodes and edges, to model cities with discrete population centers, and a closed form of quantities arising in the solutions of the differential equation, like $I_\infty$, and so on.

\section*{Data Availability}
No new data was analyzed during this study.
\section*{Conflict of Interest}
The authors declare that they have no known competing financial interests or personal relationships that could have appeared to influence the work reported in this paper.

%\section*{Acknowledgment}
%This work was initiated during the Mathematics Summer Research Camp for undergraduate students at the American University of Beirut. The camp is supported by the Dean's office at the Faculty of Arts and Sciences and the Center for Advanced Mathematical Sciences (CAMS). The authors acknowledge the contribution of Miss Clara Riachi during the Summer Research Camp.
%\bibliographystyle{unsrt}
\bibliographystyle{abbrv}
\bibliography{biblioSpatio}
\end{document}